\documentclass[reqno]{amsart}

\usepackage{amsmath,amssymb,amsthm,amsfonts,
	enumerate,hyperref,cleveref}

\usepackage{tikz-cd}

\usepackage{scalerel}

\hypersetup{
	colorlinks=true,
	linkcolor=blue,
	filecolor=magenta,      
	urlcolor=cyan,
	pdftitle={Overleaf Example},
	pdfpagemode=FullScreen,
}
\usepackage{dsfont}
\usepackage[all]{xy}
\usepackage[T1]{fontenc}
\usepackage{tikz}
\usepackage{pgfplots}
\pgfplotsset{compat=1.15}
\usepackage{mathrsfs}
\usetikzlibrary{arrows}
\newtheorem{lemma}{Lemma}[section]
\newtheorem{theorem}[lemma]{Theorem}
\newtheorem{proposition}[lemma]{Proposition}
\newtheorem{corollary}[lemma]{Corollary}

\crefrangeformat{equation}{#3(#1)#4--#5(#2)#6}

\crefname{thrm}{Theorem}{Theorems}
\crefname{theorem}{Theorem}{Theorems}
\crefname{lem}{Lemma}{Lemmas}
\crefname{cor}{Corollary}{Corollaries}
\crefname{prop}{Proposition}{Propositions}
\crefname{defn}{Definition}{Definitions}
\crefname{exm}{Example}{Examples}
\crefname{rem}{Remark}{Remarks}
\crefname{conj}{Conjecture}{Conjectures}
\crefname{quest}{Question}{Questions}
\crefname{section}{Section}{Sections}
\crefname{equation}{\unskip}{\unskip}
\crefname{enumi}{\unskip}{\unskip}
\crefname{subsection}{Subsection}{Subsections}

\begin{document}
	\title{Numerical index one characterises commutative JB$^*$-triples}	
	
	\author[L. Li]{Lei Li}
	\address[L. Li]{School of Mathematical Sciences and LPMC, Nankai University, 300071 Tianjin, China.}
	\email{leilee@nankai.edu.cn}
	
	\author[S. Liu]{Siyu Liu}
	\address[S. Liu]{School of Mathematical Sciences and LPMC, Nankai University, 300071 Tianjin, China.}
	\email{760659676@qq.com}
	
	\author[A.M. Peralta]{Antonio M. Peralta}
	\address[A.M. Peralta]{Instituto de Matem{\'a}ticas de la Universidad de Granada (IMAG), Departamento de An{\'a}lisis Matem{\'a}tico, Facultad de
		Ciencias, Universidad de Granada, 18071 Granada, Spain.}
	\email{aperalta@ugr.es}
	
	\subjclass[2010]{Primary 47A12 Secondary 46L05, 47A30, 46B20, 17C65}
	\keywords{numerical index one, JB$^*$-triple, C$^*$-algebra, commutativity, JB$^*$-algebra, associativity.} 
	
	\begin{abstract} We establish that a JB$^*$-triple $E$ is commutative if and only if its numerical index $n(E)$ is one. As a consequence, $n(E) = 1$ implies $n(E^*) = n(E^{**}) = 1$. This settles a question left open in the literature since 2008.
	\end{abstract}
	
	\maketitle
	
	
\section{Introduction} 

The numerical index of a Banach space stands as one of the most intensively studied geometric notions in functional analysis, bridging operator theory and the structural geometry of Banach spaces \cite{BonDunBookI,BonDunBookII}. Geometrically, this notion dictates the equivalence between the operator norm and the numerical radius, determining whether the latter can serve as an alternative, geometrically distinct norm on the space of bounded linear operators. Formally, let $X$ be a Banach space with topological dual $X^*$. For any operator $T$ in the space $L(X)$, of all bounded linear operators on $X$, its \emph{numerical radius} is given by
\[
v(T) = v_{B(X)}(T) = \sup \Big\{ |x^*(Tx)| : (x, x^*) \in X \times X^* : \|x\| = \|x^*\| = x^*(x) = 1 \Big\},
\]
and the (\emph{spatial}) \emph{numerical index} of $X$, $n(X),$ is the largest constant satisfying $n(X) \|T\| \leq v(T)$ for all $T \in L(X)$, explicitly defined by
\[
n(X) = \inf \Big\{ v(T) : T \in L(X), \|T\| = 1 \Big\}.
\]
Whenever $n(X) > 0$, the numerical radius $v(\cdot)$ constitutes an equivalent alternative norm on $L(X)$. It is known that the values of the numerical index exhaust the interval $[0,1]$ for real Banach spaces. In the complex framework, the sharp lower bound established by the Bohnenblust--Karlin theorem implies that the values cover the interval $[e^{-1}, 1]$ (cf.~\cite{DuncanMcGregorPryceWhite1970}).\smallskip

The numerical indices of $X$ and $X^*$ are intimately related, though they may differ in general. Indeed, at the operator level, it is known that $v_{B(X)}(T)=v_{B(X^*)}(T^*)$ for each $T\in B(X)$, and hence $n(X^*)\leq n(X)$ (see \cite[\S 9, Corollary 6]{BonDunBookI}). The long-standing open question of whether the inequality $n(X) \geq n(X^*)$ can be strict was definitively settled in \cite{BoykoKadMar2007numIndexDual}, where the authors constructed a Banach space with $n(X) > n(X^*)$ (see also \cite[Example 4.3]{MarJFA2008}). This pathology cannot occur in the context of C$^*$-algebras due to their rigid algebraic structure. Indeed, a classical result by Huruya~\cite{HuruyaPAMS1977} establishes that for any C$^*$-algebra $A$, $n(A) = 1$ if $A$ is commutative, and $n(A) = \frac{1}{2}$ otherwise. Since $A$ and its bidual, $A^{**},$ share the same algebraic status (commutative or non-commutative), Huruya's dichotomy implies $n(A^{**}) = n(A) $. By the standard duality bounds $n(A^{**}) \leq n(A^*) \leq n(A)$, it follows that $n(A) = n(A^*)$ for every C$^*$-algebra. A similar conclusion holds for JB$^*$-algebras since, by a result due to Kaidi, Morales, and A. Rodríguez-Palacios (see \cite{KadMorRodPal2001} or \cite[Proposition 3.5.44 and comments in \S 2.1.47 and page 422]{CabRodBookV1}), for each JB$^*$-algebra  $\mathcal{A}$ we have $n(\mathcal{A}) = 1$ whenever $\mathcal{A}$ is associative, and $n(\mathcal{A}) = 1/2$ otherwise. For the sake of brevity, technical definitions and some basic background, including references, are given in the next subsection. \smallskip

One of the most persistent open questions regarding the numerical index, which already appeared in the 2008 paper by Martín~\cite{MarMathNach2008} (see also~\cite{OikhbergMR2008}), asks whether $n(X)=1$ implies $n(X^*) = 1$ when $X$ is a JB$^*$-triple or the predual of a JBW$^*$-triple (i.e. a JB$^*$-triple which is also a dual Banach space). \smallskip

In the case that $X$ is a JBW$^*$-triple predual, Martín proved in \cite[Corollary 2.2]{martin09positive} that $v(X) = v(X^*)$, which solves the problem for JBW$^*$-triple preduals. Cabezas and the third author of this note gave a positive answer to the question above when $X$ is a JBW$^*$-triple (see \cite[Theorem~4]{CaPe24}). However, the problem for general JB$^*$-triples has remained open until the present note. We remark that a positive answer to our problem would follow if one could prove the stronger conclusion that every JB$^*$-triple with numerical index $1$ is commutative (cf. \cite[Lemma 3]{CaPe24}). \smallskip

The main conclusion of this note establishes that a JB$^*$-triple $E$ is commutative if and only if it has numerical index one, that is, $n(E)=1$ (see Corollary~\ref{c characterization of commutative JB*-triples as those with numerical index one}). Consequently, for any JB$^*$-triple $E$ we have $$n(E)=1 \Leftrightarrow n(E^{*})=1\Leftrightarrow  n(E^{**})=1.$$ We actually prove a stronger result showing that if a JB$^*$-triple $E$ can be embedded as a weak${}^{*}$-dense JB$^*$-subtriple of a JBW$^*$-triple of the form $\displaystyle W =\bigoplus_{j\in \Gamma}^{\ell_{\infty}} C_j$, where $\{C_j\}_{j\in \Gamma}$ is a family of Cartan factors satisfying one of the following statements: \begin{enumerate}[$(a)$]\item for some index $j_{0}$, the Cartan factor $C_{j_{0}}$ has $\text{rank} \geq 2$;\item every $C_{j}$ is a complex Hilbert space regarded as a type $1$ Cartan factor, and $\dim(C_{j_0}) \geq 2$ for some $j_0\in \Gamma$;  \end{enumerate} then the numerical index of the Banach space $E$ is less than or equal to $\frac{1}{2}$ (see Theorem~\ref{t non-commutative sufficient conditions for ni 1/2}). This result also provides a positive solution to \cite[Problem 1]{CaPe24}.\smallskip

Among the consequences of our main result, we show that a JB$^*$-triple $E$ has numerical index $1$ if and only if $E^*$ (or equivalently, $E^{**}$) has the alternative Daugavet property (see Corollary~\ref{c index 1 in preduals Daugavet}).\smallskip

The paper culminates with a new result proving that commutativity in JB$^*$-triples is characterized in terms of subtriples generated by pairs of elements; specifically, a JB$^*$-triple $E$ is commutative if and only if every JB$^*$-subtriple of $E$ generated by two elements is commutative. Moreover, commutativity is characterized by a Le Page-type inequality evaluated on pairs of elements in JB$^*$-subtriples generated by two elements (see Proposition~\ref{prop commutativity is charcterized by pairs}).

\subsection*{Background: JB$^*$-triple structures on function spaces}\ \ \smallskip

\noindent There are Banach spaces of continuous functions which are not closed for the usual (binary) pointwise product. To provide a classic example, let us recall that if $\mathbb{T}$ denotes the unit sphere of $\mathbb{C}$, a principal $\mathbb{T}$-bundle is a
subset $L$ of a locally convex Hausdorff complex linear space $X$ satisfying the following two properties:
\begin{enumerate}[$\bullet$]
\item $L$ is $\mathbb{T}$-symmetric, that is,  $\mathbb{T} L = L$.
\item  $0 \notin L$ and $L \cup \{0\}$ is compact.
\end{enumerate} When equipped with the supremum norm, the Banach space
$$ C_0^\mathbb{T}(L):=\{a\in C_0(L):a(\lambda t)=\lambda a(t)\text{ for every } (\lambda,t)\in\mathbb{T}\times L\}$$ is a closed subspace of the space $C_0(L)$ of all complex-valued continuous functions on $L\cup\{0\}$ vanishing at $0$. If $\Omega(\mathcal{C})$ stands for the spectrum of a commutative C$^*$-algebra $\mathcal{C}$ with dual space $\mathcal{C}^*$, the topological space $\mathbb{T}\ \Omega(\mathcal{C})\subseteq  \mathcal{C}^*$ admits a natural structure of principal $\mathbb{T}$-bundle where $\mu (\lambda \phi ) = (\mu \lambda) \phi$ ($\lambda,\mu \in \mathbb{T}$, $\phi\in \Omega(\mathcal{C})$). Given $f\in \mathcal{C} \cong C_0(\Omega(\mathcal{C}))$ we write $\tilde{f}$ for the function in $C_0^{\mathbb{T}} \left( \mathbb{T}\  \Omega(\mathcal{C}) \right)$ defined by $\tilde{f} (\lambda \phi ) = \lambda f(\phi )$. Clearly, the mapping $f\mapsto \tilde{f}$ is a surjective linear isometry from $\mathcal{C}\cong C_0\left( \Omega(\mathcal{C}) \right)$ onto $C_0^{\mathbb{T}}\left( \mathbb{T}\  \Omega(\mathcal{C}) \right)$ (cf. \cite[Proposition 10]{Ol74}). So, every commutative C$^*$-algebra is a $C_0^\mathbb{T}(L)$-space.  However, as shown in \cite[Corollary 1.13 and subsequent comments]{kaup83riemann} there are examples of $C_0^\mathbb{T}(L)$-spaces which are not isometrically isomorphic to any commutative C$^*$-algebra. Every $C_0^\mathbb{T}(L)$-space is a Lindenstrauss space, that is, its dual space is isometrically isomorphic to an $L_1(\mu)$ space for some measure $\mu$ (cf. \cite[Proposition 2.6]{FriRuCommutative} or \cite{Ol74}).\smallskip

If $f$ is a non-zero function in $C_0^\mathbb{T}(L)$, the functions $f^2$ and $f^*$ do not lie in $C_0^\mathbb{T}(L)$. However, triple products of the form $\{f,g,h\} = f g^* h$ remain in $C_0^\mathbb{T}(L)$ whenever $f,g,h\in C_0^\mathbb{T}(L)$. This triple product induces a structure of JB$^*$-triple on $C_0^\mathbb{T}(L)$.\smallskip 

Recall that a JB$^*$-triple is a complex Banach space $E$ with a
continuous ternary product $\{.,.,.\}$ symmetric and bilinear in
the outer variables and conjugate linear in the middle satisfying
$$L(x,y)  \{u,v,w\} = \{{L(x,y)u},v,w\} - \{ u,{L(y,x)v},w\} + \{u,v,{L(x,y)w}\},$$ such that $\|L(x,x)\| = \|x\|^2$ and $L(x,x)$ is an
hermitian operator on $E$ with non-negative spectrum, where
$L(x,y)$ is given by $L(x,y) z = \{x,y,z\}$ (see  \cite{kaup83riemann}).\medskip

The triple product on $C_0^\mathbb{T}(L)$ also satisfies the following identity
\begin{equation}\label{eq identity for commutative triples}
\{a,b,\{x,y,z\}\} = \{x,y,\{a,b,z\}\}, \hbox{ for all } x,y,z,a,b\in C_0^\mathbb{T}(L).	
\end{equation} A JB$^*$-triple satisfying the identity in \eqref{eq identity for commutative triples} is called \emph{commutative} or \emph{abelian}. Every commutative JB$^*$-triple is isometrically triple isomorphic to some $C_0^\mathbb{T}(L)$ for a principal $\mathbb{T}$-bundle $L$ (see \cite[Corollary 1.11]{kaup83riemann} or \cite[Theorem 4.2.7]{CabRodBookV1}). Non-commutative JB$^*$-triples include all non-commutative C$^*$-algebras, non-associative JB$^*$-algebras, spaces $B(H,K)$ of bounded linear operators between complex Hilbert spaces $H$ and $K$ with dim$(H)\geq 2$, and in particular, all complex Hilbert spaces with dimension $\geq 2$.\smallskip

As commented above, every commutative JB$^*$-triple is isomorphic to some $C_0^\mathbb{T}(L)$ for a principal $\mathbb{T}$-bundle $L$, and the class of function spaces defined by the latter objects is strictly wider than the set of all commutative C$^*$-algebras. Particular commutative JB$^*$-subtriples admit a finer representation. A JBW$^*$-triple is a JB$^*$-triple which is also a dual Banach space. Every JBW$^*$-triple admits a unique (isometric) predual and its triple product is separately weak$^*$-continuous \cite{BT1986}. The second dual of each JB$^*$-triple is a JBW$^*$-triple containing $E$ as a JB$^*$-subtriple \cite{dineen86complete}. Every commutative JBW$^*$-triple is a commutative von Neumann algebra (see \cite[Theorem 3]{CaPe24}).\smallskip 

Furthermore, the JB$^*$-subtriple $E_a$ generated by a single element $a$ in a JB$^*$-triple $E$ (i.e. the norm closure of the linear span of all odd powers of the form $a$, $a^{[3]}= \{a,a,a\},$ and $a^{[2n+1]} = \{a,a,a^{[2n-1]}\}$ for all $n\geq 2$), is isometrically JB$^*$-triple isomorphic to a commutative C$^*$-algebra of the form $C_0
(\Omega_a),$ for some (unique) locally compact Hausdorff space $\Omega_a\subseteq (0,\|a\|],$ such that $\Omega_a\cup \{0\}$ is compact. Here the symbol $C_0 (\Omega_a)$ denotes the Banach space of all complex-valued continuous functions on $\Omega_a\cup \{0\}$ vanishing at $0$ (cf. \cite[Corollary 1.15]{kaup83riemann} or \cite[Theorem 4.2.9]{CabRodBookV1}). The set $\Omega_a$ is known as the triple spectrum of $a$. Thus, given any function $f \in C_0(\Omega_a)$, there exists a unique element $f_t(a) \in E_a,$ and the continuous triple functional calculus $f \mapsto f_t(a)$ preserves linear combinations, triple products, and distances.\smallskip

Tripotents in JB$^*$-triples, that is, elements satisfying $\{e,e,e\}=e$, generalize partial isometries in C$^*$-algebras. Let $e$ be a tripotent in a JB$^*$-triple $E$. The eigenvalues of the operator $L(e,e)$ are contained in the set $\{0, \frac12, 1\}$ (see \cite[Fact 4.2.14]{CabRodBookV1}). The \emph{Peirce subspaces} associated with the tripotent $e$ are precisely the spaces 
\[
E_j(e) = \left\{ x\in E \Bigm| \{e,e,x\} = \frac j2 x \right\} \quad \text{for } j=0,1,2.
\] This spectral property leads to the Peirce decomposition $E = E_2(e) \oplus E_1(e) \oplus E_0(e)$. The corresponding canonical projections $P_j(e)$ ($j=0,1,2$) are contractive operators (see \cite[Fact 4.2.14]{CabRodBookV1}). The Peirce-$2$ subspace $E_2(e)$ is a JB$^*$-algebra with product $x\circ_e y=\{x,e,y\}$ and involution $x^{*_e}=\{e,x,e\}$ (cf. \cite[Theorem 2.2]{BraKaUp78}, \cite[Theorem 3.7]{kaup77jordan} or \cite[Corollary 4.2.30]{CabRodBookV1}).\smallskip

The \emph{range tripotent}, $r(a)$, of an element $a$ in a JBW$^*$-triple $W$ is the
smallest tripotent $e\in W$ satisfying that $a$ is positive
in the JBW$^*$-algebra $W_{2} (e)$. The existence of the range tripotent is guaranteed, for example, by \cite[Lemma
3.3]{edwards96compact}. The just quoted reference also shows that, by employing the continuous triple functional calculus, the range tripotent of $a$ can be computed as the limit in the weak$^*$ topology of $W$ of the sequence of the iterated cubic roots of $a$, that is,
\begin{equation}\label{equ range tripotent as weak* limit} r(a) = \text{w}^*\text{-}\lim_{n\to\infty} a^{[1/3^n]}.
\end{equation} In particular, $r(a)\neq 0$ whenever $a\neq 0$. A general JB$^*$-triple $E$ might not contain a single non-zero tripotent. However, for each $a\in E$ we can compute its range tripotent in $E^{**}$, which will be denoted by $r_{_{E^{**}}} (a)$ or simply by $r(a)$.  \smallskip

For convenience, we recall the definition of Cartan factors. These complex Banach spaces are classified into six types: type 1 Cartan factors include the spaces ${L}(H, K)$ of all bounded linear operators between complex Hilbert spaces $H$ and $K$ with triple product $\{a,b,c\}=\frac12 (a b^* c+ c b^* a)$; type 2 and type 3 Cartan factors are the subtriples of $L(H)$ given by all $t$-skew-symmetric ($A = -jA^*j$) and $t$-symmetric ($A = jA^*j$) operators, where $j: H \to H$ is a conjugation (defined as a conjugate-linear isometry of period 2); type 4 represents spin factors, which are essentially complex Hilbert spaces equipped with an equivalent norm; while type 5 ($M_{1,2}(\mathbb{O})$) and type 6 ($H_3(\mathbb{O})$) are the exceptional finite-dimensional spaces of $1 \times 2$ matrices and $3 \times 3$ hermitian matrices over the complex octonions $\mathbb{O}$ (see \cite{DanFri, horn87classification} and \cite[\S 7.1.4]{CabRodVol2} for complete details).\smallskip

Elements $a,b$ in a JB$^*$-triple $E$ are said to be \emph{orthogonal} (denoted by $a\perp b$) if $L(a,b)=0$. Several characterizations of orthogonality can be found in \cite[Lemma 1]{BurFerGarMarPe2008}. The \emph{rank} of $E$ is the smallest cardinal number $r$ such that the cardinality of any set of mutually orthogonal non-zero elements in $E$ does not exceed $r$.\smallskip

As observed before Remark 3.1 in \cite{LiLiuPe25} (see  \cite[\S 3]{BeLoPeRod2004} or \cite[Corollary in page 308]{DanFri}), a JB$^*$-triple has rank-one if and only if it is a complex Hilbert space regarded as a type 1 Cartan factor. Let $H \cong L(H,\mathbb{C})$ be a complex Hilbert space equipped with its structure of type 1 Cartan factor. The triple product is given by $\{x,y,z\} =\frac12 \langle x| y \rangle z + \frac12 \langle z| y \rangle x$, for all $x,y,z\in H$, where $\langle \cdot | \cdot \rangle$ denotes the inner product of $H$. Clearly, $H$ is a JBW$^*$-triple, and every norm-one  element in $H$ is a tripotent. The range tripotent of each non-zero element $x\in H$ is $r(x) = \frac{x}{\|x\|}$. Furthermore, the JB$^*$-subtriple of $H$ generated by $x$ satisfies $H_x = \mathbb{C} r(x)$, $\Omega_x=\{\|x\|\}$, and for each function $f:\{\|x\|\}\to \mathbb{C}$, we have $f_t (x) = f (\|x\|)\ r(x) = f (\|x\|) \frac{x}{\|x\|} $.\smallskip

Let $E$ be a JB$^*$-triple. A subset $S \subseteq E \setminus \{0\}$ is called an orthogonal set if $x \perp y$ for all distinct elements $x,y \in S$. The \emph{rank} of $E$, denoted by $\operatorname{rank}(E)$, is the smallest cardinal number $r$ such that the cardinality of any orthogonal set in $E$ does not exceed $r$.\smallskip

A celebrated result by W. Kaup in \cite[Proposition 5.5]{kaup83riemann} assures that a linear bijection between JB$^*$-triples is a linear isometry if and only if it is a triple isomorphism (i.e. it preserves triple products). Consequently, the triple product of a JB$^*$-triple is unique, that is, if a complex Banach space $E$ admits two triple products inducing a JB$^*$-triple structure on $E$, both products are identical.\label{page uniqueness of the triple product}\smallskip

A subspace $I$ of a JB$^*$-triple $E$ is called an \emph{inner ideal} (respectively, a \emph{triple ideal}) of $E$ if $\{I,E, I\}\subseteq I$ (respectively, $\{E,E, I\}+\{E,I, E\}\subseteq I$). All inner ideals and triple ideals in this note will be assumed to be closed. Geometrically speaking, the triple ideals in $E$ are precisely the $M$-ideals of $E$ as a Banach space \cite[Theorem 3.2]{BT1986}. In the case of JBW$^*$-triples, weak$^*$-closed triple ideals are all $M$-summands which are actually orthogonally complemented \cite{Horn1987predual}. The inner ideal $E(a)$ generated by a single element $a$ in $E$ enjoys additional algebraic properties. It is known that ${E}(a) = \overline{\{a, {E}, a\}}^{\|.\|} = \overline{Q(a) ({E})}^{\|.\|}$ in ${E}$, and satisfies the following properties:
\begin{enumerate}[$(1)$]  
	\item ${E}(a)$ contains the JB$^*$-subtriple of $E$ generated by $a$.
	\item ${E}(a)$ is a JB$^*$-subalgebra of ${E}^{**}_2(r_{{\scaleto{E^{**}\mathstrut}{4pt}}} (a))$, where $r_{{\scaleto{E^{**}\mathstrut}{4pt}}} (a)$ denotes the range tripotent of $a$ in $E^{**}$.
	\item The weak$^*$-closure of ${E}(a)$ in ${E}^{**}$, $\overline{{E}(a)}^{w^*}$, identifies naturally with ${E}(a)^{**}$ and coincides with ${E}_{2}^{**}(r(a)).$
\end{enumerate} The reader can find a proof of the above properties in \cite[Proposition 2.1]{BuCHuZa2000MathScand}.\smallskip

We are actually interested on a suitable version of the above result. Suppose now that $E$ is a JB$^*$-triple, $W$ is a JBW$^*$-triple, and $\kappa: E\hookrightarrow W$ is an isometric triple embedding with weak$^*$-dense image. Let us fix an element $a\in E$. The range tripotent of $\kappa(a)$ in $W$ ($r_{{\scaleto{W\mathstrut}{4pt}}} (\kappa(a))$ in short) is, in principle, unrelated to $r_{{\scaleto{E^{**}\mathstrut}{4pt}}} (a)$.\smallskip

By \cite[Proposition 6]{BarDaHor1988}, there exists a weak$^*$-closed triple ideal $M$ of $E^{**},$ and a triple isomorphism $\Phi: M \to W$ making commutative the following diagram:
$$ \begin{tikzcd}[row sep=large, column sep = large]  E \arrow[hook,swap]{d}{\iota_{_{E}}} \arrow[hook]{r}{\kappa} &  W \\
	E^{**}  \arrow[rightarrow]{r}{\pi_{_M}} &  M \arrow[u, "\Phi"]
\end{tikzcd}
$$	In the diagram above the hooked arrows denote the canonical embeddings, and $\pi_{_{M}}$ stands for the natural projection of $E^{**}$ onto $M$. It is clear that $E\cong \iota(E)\cong\kappa(E)$ and $E(a)\cong \iota(E) (\iota(a))\cong\kappa(E) (\kappa(a))$, where the symbol $\cong$ means (isometrically) triple isomorphic. By a little abuse of notation we write  $r_{{\scaleto{E^{**}\mathstrut}{4pt}}} (a)$ for  $r_{{\scaleto{E^{**}\mathstrut}{4pt}}} (\iota_{_E}(a))$. 
 The triple isomorphism $\Phi$ is weak$^*$-continuous (cf. \cite[Corollary 3.22]{Horn1987predual}), and the same is clearly true for the projection $\pi_{_M}$. Since all maps in the diagram are triple homomorphisms, we can easily deduce from \eqref{equ range tripotent as weak* limit} that $\Phi \left(\pi_{_M} (r_{{\scaleto{E^{**}\mathstrut}{4pt}}} (a) )\right) = r_{{\scaleto{W\mathstrut}{4pt}}} (\kappa(a))$, and 
$$
\Phi \pi_{_M} \{\iota_{_E}(x), r_{{\scaleto{E^{**}\mathstrut}{4pt}}} (a), \iota_{_E}(y) \} = \{\kappa(x), r_{{\scaleto{W\mathstrut}{4pt}}} (\kappa(a)), \kappa(y) \},$$ for all $x,y\in E$. If we take $\iota_{_E}(x), \iota_{_E}(y)\in \iota_{_E}(E)(a),$ it follows from the previously commented result in \cite[Proposition 2.1]{BuCHuZa2000MathScand} that \begin{equation}\label{eq preservation of Jordan products}
\begin{aligned}
		\kappa  (E) (\kappa (a)) &= \Phi \pi_{_M} \iota_{_E}  (E) (\iota_{_E} (a)) \ni  \Phi \pi_{_M} \left(\iota_{_E}(x)\circ_{{\scaleto{{r_{{\scaleto{E^{**}\mathstrut}{4pt}}} (a)}\mathstrut}{6pt}}} \iota_{_E}(y)\right) \\
		&= \Phi \pi_{_M} \{\iota_{_E}(x), r_{{\scaleto{E^{**}\mathstrut}{4pt}}} (a), \iota_{_E}(y) \} = \{\kappa(x), r_{{\scaleto{W\mathstrut}{4pt}}} (\kappa(a)), \kappa(y) \} \\
		&= \kappa(x)\circ_{{\scaleto{{r_{{\scaleto{W\mathstrut}{4pt}}} (\kappa(a))}\mathstrut}{6pt}}} \kappa(y), \hbox{ and }	
	\end{aligned}
\end{equation}
\begin{equation}\label{eq preservation of involution} \begin{aligned}
		\kappa  (E) &(\kappa (a))= \Phi \pi_{_M} \iota_{_E}  (E) (\iota_{_E} (a)) \ni \Phi \pi_{_M} \left(\left( \iota_{_E}(x) \right)^{*_{{\scaleto{{r_{{\scaleto{E^{**}\mathstrut}{3pt}}} (a)}\mathstrut}{5pt}}}} \right)\\
		&= \Phi \pi_{_M} \{r_{{\scaleto{E^{**}\mathstrut}{4pt}}} (a), \iota_{_E}(x), r_{{\scaleto{E^{**}\mathstrut}{4pt}}} (a) \} = \{r_{{\scaleto{W\mathstrut}{4pt}}} (\kappa(a)), \kappa(x), r_{{\scaleto{W\mathstrut}{4pt}}} (\kappa(a)) \} = \left( \kappa(x) \right)^{*_{{\scaleto{{r_{{\scaleto{W\mathstrut}{3pt}}} (\kappa(a))}\mathstrut}{5pt}}}},	
	\end{aligned}
\end{equation} which together assure that $E(a)\cong \kappa  (E) (\kappa (a))$ is a JB$^*$-subalgebra of the JBW$^*$-algebra $W_2 (r_{{\scaleto{W\mathstrut}{4pt}}} (\kappa(a)))$. Furthermore, as we observed above, $\Phi \pi_{_M} \left( \iota_{_E}  (E) (\iota_{_E} (a)) \right) = \kappa  (E) (\kappa (a))$, and the mapping  $$\Psi= \Phi \pi_{_M}|_{\iota_{_E}  (E) (\iota_{_E} (a))} : \iota_{_E}  (E) (\iota_{_E} (a)) \to \kappa  (E) (\kappa (a))$$ is a Jordan $^*$-homomorphism. Actually $\Psi$ is a Jordan $^*$-isomorphism. Namely, if $0= \Psi (\iota_{E} (x)) = \Phi \pi_{_M} \iota_{_E} (x) =\kappa (x),$ it follows that $x=0$. We gather the conclusions in the next result in which we simplified the obvious identifications.
 
\begin{proposition}\label{p BCZ for subtriples} Let $\kappa: E\hookrightarrow W$ be an isometric triple embedding of a JB$^*$-triple inside a JBW$^*$-triple. Suppose that $\kappa$ has weak$^*$-dense image. Then the following statements hold for each element $a$ in $E$:\begin{enumerate}[$(a)$]
		\item $\kappa(E) (\kappa(a))$ is a JB$^*$-subalgebra of $W_2(r_{{\scaleto{W\mathstrut}{4pt}}} (\kappa(a)))$, where $r_{{\scaleto{W\mathstrut}{4pt}}} (\kappa(a))$ denotes the range tripotent of $\kappa(a)$ in $W$. The element $\kappa (a)$ is positive in $\kappa(E) (\kappa(a))$. 
	\item $\kappa(E) (\kappa(a))$ contains the JB$^*$-subtriple of $W$ generated by $\kappa(a)$. 
	\item $\kappa(E) (\kappa(a))$ is Jordan $^*$-isomorphic to the JB$^*$-algebra $E(a)$, where the latter is regarded as a JB$^*$-subalgebra of  ${E}^{**}_2(r_{{\scaleto{E^{**}\mathstrut}{4pt}}} (a))$ and  $r_{{\scaleto{E^{**}\mathstrut}{4pt}}} (a)$ denotes the range tripotent of $a$ in $E^{**}$. 
	\end{enumerate}
In particular, $\kappa(E) (\kappa(a))$ is associative if and only if $E(a)$ is.   
\end{proposition}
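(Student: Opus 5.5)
The plan is to transport everything through the commutative diagram furnished by \cite[Proposition 6]{BarDaHor1988}, namely $\kappa = \Phi\,\pi_{_M}\,\iota_{_E}$, where $M$ is a weak$^*$-closed triple ideal of $E^{**}$ and $\Phi: M\to W$ is a weak$^*$-continuous triple isomorphism. First we check that $\Phi\pi_{_M}(r_{E^{**}}(a)) = r_W(\kappa(a))$. The map $\Phi\pi_{_M}$ is a weak$^*$-continuous triple homomorphism, so it commutes with the continuous triple functional calculus. In particular it sends the iterated cubic roots $\iota_{_E}(a)^{[1/3^n]}$ to $\kappa(a)^{[1/3^n]}$, and passing to weak$^*$-limits in \eqref{equ range tripotent as weak* limit} gives the identity. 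Consequently $\Phi\pi_{_M}$ intertwines the products $\{x, r_{E^{**}}(a), y\}$ and $\{r_{E^{**}}(a), x, r_{E^{**}}(a)\}$ with their counterparts for $r_W(\kappa(a))$. These are exactly \eqref{eq preservation of Jordan products} and \eqref{eq preservation of involution}.

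Next we use \cite[Proposition 2.1]{BuCHuZa2000MathScand} for $E$: the inner ideal $E(a)$ is a JB$^*$-subalgebra of $E^{**}_2(r_{E^{**}}(a))$ that contains $E_a$, and $a$ is positive there. Since $\kappa$ is an injective triple homomorphism, it maps $\{a,E,a\}$ onto $\{\kappa(a),\kappa(E),\kappa(a)\}$. Taking norm closures, $\kappa(E(a)) = \kappa(E)(\kappa(a))$. We now set $\Psi = \Phi\pi_{_M}$ restricted to $\iota_{_E}(E)(\iota_{_E}(a))$. By the previous paragraph, $\Psi$ preserves the Jordan products $\circ_r$ and the involutions $*_r$ for the corresponding range tripotents.

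Since $\Phi\pi_{_M}$ is a triple homomorphism, it maps $E^{**}_2(r_{E^{**}}(a))$ into $W_2(r_W(\kappa(a)))$. Hence the image $\kappa(E)(\kappa(a))$ sits inside $W_2(r_W(\kappa(a)))$, and it is closed under $\circ$ and $*$ there; this is what makes it a JB$^*$-subalgebra. Injectivity of $\Psi$ follows from injectivity of $\kappa$, so $\Psi$ is a bijective Jordan $^*$-homomorphism, i.e. a Jordan $^*$-isomorphism. This proves $(c)$.

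Positivity of $\kappa(a)$ is preserved by Jordan $^*$-homomorphisms, which gives the second half of $(a)$. Alternatively, one can use $\kappa(a) = \kappa(a^{[1/3]})\circ_{r}\kappa(a^{[1/3]})$ after writing $a$ as a square via the functional calculus on $E_a\subseteq E(a)$. For $(b)$, $E_a\subseteq E(a)$ together with $\kappa(E_a) = W_{\kappa(a)}$ does the job, because $\kappa$ preserves odd powers and is isometric, so norm closures correspond. The final assertion holds because associativity is preserved under Jordan $^*$-isomorphisms.

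The main obstacle is the first step, the identification $\Phi\pi_{_M}(r_{E^{**}}(a)) = r_W(\kappa(a))$. It needs the weak$^*$-continuity of $\Phi$ from \cite[Corollary 3.22]{Horn1987predual}, and it needs triple homomorphisms to commute with the odd functional calculus $f\mapsto f_t(\cdot)$. I would settle the latter by approximating $t^{1/3^n}$ uniformly on the triple spectrum by odd polynomials.
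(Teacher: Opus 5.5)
Your proposal is correct and follows essentially the same route as the paper: you factor $\kappa=\Phi\pi_{M}\iota_{E}$ via \cite[Proposition 6]{BarDaHor1988}, get $\Phi\pi_{M}(r_{E^{**}}(a))=r_{W}(\kappa(a))$ from the weak$^*$-limit of the iterated cubic roots, and push \cite[Proposition 2.1]{BuCHuZa2000MathScand} forward through the injective Jordan $^*$-homomorphism $\Psi$. The only slip is in your optional alternative for positivity: with $r=r_W(\kappa(a))$ one has $\kappa(a^{[1/3]})\circ_{r}\kappa(a^{[1/3]})=\kappa(a^{[2/3]})$, not $\kappa(a)$, so you need $a^{[1/2]}$ there; your main argument already suffices, and so does the positivity of $\kappa(a)$ in $W_2(r_W(\kappa(a)))$, which is built into the definition of the range tripotent.
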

	
\section{The numerical index of noncommutative JB$^*$-triples}

This section is entirely devoted to proving that the numerical index of each non-commutative JB$^*$-triple must be smaller than or equal to $\frac12$. We begin with a technical observation derived from the Sait{o}-Tomita-Lusin theorem for JB$^*$-triples in \cite{BFMP06saito-tomima-lusin}.

\begin{lemma}\label{l Lusin} Let $E$ be a weak$^*$-dense JB$^*$-subtriple of a JBW$^*$-triple $\displaystyle W$. Let $e_1, \ldots, e_m$ be a finite family of mutually orthogonal minimal tripotents in $W$. Then the following statements hold:
\begin{enumerate}[$(a)$]
\item There exists a norm-one element $a\in E$ such that $$a = e_1+\ldots+ e_m + P_0( e_1+\ldots+ e_m) (a).$$
\item For each $1\leq j\leq  m$ there exists a norm-one element $b\in E$ such that $$b = e_1+\ldots+ e_j+ P_0( e_1+\ldots+ e_m) (b).$$
\end{enumerate} 
\end{lemma}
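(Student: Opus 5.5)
The plan is to combine the Saitô--Tomita--Lusin theorem for JB$^*$-triples \cite{BFMP06saito-tomima-lusin} with the Peirce calculus of the finite-rank tripotent $e=e_1+\ldots+e_m$. Since the $e_j$ are mutually orthogonal minimal tripotents, $e$ is a tripotent and
\[
W_2(e)=\bigoplus_{i\le j} W_2(e_i,e_j), \qquad W_2(e_j)=\mathbb{C}e_j .
\]
Here $W_2(e)$ is a finite-dimensional JBW$^*$-algebra with unit $e$; it is finite dimensional because it has finite rank and is spanned by joint Peirce spaces of minimal tripotents.

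\textbf{Step 1.} We apply the Lusin-type theorem. Given $\varepsilon>0$, the version in \cite{BFMP06saito-tomima-lusin} for weak$^*$-dense subtriples provides $x\in E$ with $\|x\|\le 1$ (or $\le \|e\|+\varepsilon=1+\varepsilon$) such that $x$ agrees with $e$ ``up to a small projection''. In the present atomic setting, the most convenient form is that $\|P_2(e)(x)-e\|<\varepsilon$ and that $x$ is almost Peirce-orthogonal to $e$. If the quoted result only yields a net, we instead use weak$^*$-density directly, via Kaplansky density for JB$^*$-triples (the bidual/triple-ideal picture from \cite{BarDaHor1988} recalled before Proposition~\ref{p BCZ for subtriples}). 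This gives a net $x_\lambda$ in the unit ball of $E$ converging weak$^*$ to $e$. Since $W_2(e)$ is finite dimensional and $P_2(e)$ is weak$^*$-continuous, $P_2(e)x_\lambda\to e$ in norm.

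\textbf{Step 2.} We pass to an element whose Peirce-$2$ and Peirce-$1$ components are exact. We take $x$ with $\|P_2(e)x-e\|$ small and use the triple functional calculus in the inner ideal $E(x)$, which by Proposition~\ref{p BCZ for subtriples} is a JB$^*$-subalgebra of $W_2(r(x))$ containing the subtriple generated by $x$. The aim is to produce a norm-one $a\in E_x$ with $P_2(e)a=e$ and $P_1(e)a=0$. Choose a continuous $f:[0,1]\to[0,1]$ with $f\equiv 0$ near $0$ and $f\equiv1$ on $[1-\delta,1]$, and set $a=f_t(x)$. Since $x$ is close to $e+(\text{Peirce-}0\text{ part})$ and the $e_j$ are minimal, the spectral resolution of $x$ near the value $1$ should be exactly carried by $e$ plus a part orthogonal to $e$. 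Then $f_t(x)=e+y$ with $y\perp e$, i.e. $y\in W_0(e)$, giving $a=e+P_0(e)(a)$ and $\|a\|=1$. \emph{This is the main obstacle}: showing that the spectral tripotent $\chi_{[1-\delta,1]}(x)$ dominates $e$ and that $f_t(x)$ has no Peirce-$1$ component. To handle it, I would first replace $x$ by $\{x,x,x\}$-powers or by $P_2(e)$-corrections, using minimality: if $u\in W_2(e_j)=\mathbb{C}e_j$ has $u\approx e_j$, the standard estimate $\|P_1(e_j)x\|^2\le 2(1-\|P_2(e_j)x\|^2)$-type inequalities for norm-one $x$ force $P_1(e)x$ to be small. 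Odd powers $x^{[2n+1]}$ then kill the Peirce-$1$ part in the limit, because the $e$-component stays near $1$ while other spectral values less than $1$ decay.

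\textbf{Step 3.} For (b), we apply (a) and then cut down. Take $a=e+P_0(e)a$ from (a) and consider the Peirce-$2$ JB$^*$-algebra generated inside $E(a)$. By Proposition~\ref{p BCZ for subtriples}, $E(a)$ is a JB$^*$-subalgebra of $W_2(r(a))$, and $e_1,\ldots,e_m$ are orthogonal projections there. Since $E(a)\supseteq\{a,E,a\}$ and $E$ is weak$^*$-dense, we can repeat the argument of Steps 1--2 with $e_1+\ldots+e_j$ in place of $e$, but now localized in $\{a,E,a\}$ so that everything lives in $W_2(r(a))$, which lies orthogonal to nothing outside $e+P_0(e)$. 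This yields $b=e_1+\ldots+e_j+z$ with $z\perp e_1+\ldots+e_j$ and $z\in W_2(r(a))$. Then a further cut $b\mapsto \{b,\,\cdot\,,b\}$-type correction removes the components on $e_{j+1},\ldots,e_m$, giving $z\in W_0(e)$ and $\|b\|=1$.
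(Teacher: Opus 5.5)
There is a genuine gap in Step~2, and it is where the whole content of the lemma lies. The statement requires the \emph{exact} identities $P_2(e)(a)=e$ and $P_1(e)(a)=0$ for $e=e_1+\cdots+e_m$. An element (or net) that is only close to $e$ in the Peirce-$2$ space cannot be turned into such an $a$ by triple functional calculus or by odd powers of that element, because its spectral tripotents need not be aligned with $e$.

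For instance, take $W=E=M_2(\mathbb{C})$, $e=e_{11}$ and $x=\cos\theta\, e_{11}+\sin\theta\, e_{12}$. Then $\|x\|=1$, $\|P_2(e)(x)-e\|=1-\cos\theta$ and $\|P_1(e)(x)\|=\sin\theta$, both as small as you like. But $x$ is itself a minimal tripotent, so $E_x=\mathbb{C}x$, $f_t(x)=f(1)\,x$ and $x^{[2n+1]}=x$ for all $n$. Nothing your procedure produces from this legitimate approximant has the form $e+y$ with $y\perp e$ (here $a=e$ works, but your method does not find it). So the claim that the spectral resolution of $x$ near $1$ is ``carried by $e$ plus a part orthogonal to $e$'' is false for general approximants. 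The odd-power remedy fails too: at best the odd powers converge weak$^*$ to a tripotent of $W$, which need not lie in $E$.

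Step~1 also has an error: $W_2(e)$ need not be finite dimensional. In an infinite-dimensional spin factor two orthogonal minimal tripotents add up to a unitary, so $W_2(e)=W$, and the norm convergence $P_2(e)(x_\lambda)\to e$ is not available. The paper does not approximate at all. For $W=E^{**}$ it takes the exact conclusion from the Lusin-type theorem \cite[Theorem 3.3$(b)$]{BFMP06saito-tomima-lusin}, combined with \cite[Lemma 1.6]{FriRuss85Crelles}: a norm-one element whose Peirce-$2$ component equals the tripotent has vanishing Peirce-$1$ component, which is the exact version of the estimate you allude to. You use that theorem only in an approximate form. The general $W$ is then reached through the weak$^*$-closed triple ideal $M\subseteq E^{**}$ and the isomorphism $\Phi$ recalled before Proposition~\ref{p BCZ for subtriples}.

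Step~3 is incomplete as well. Running the construction for $e_1+\cdots+e_j$ only controls $P_0(e_1+\cdots+e_j)$, not $P_0(e)$. The ``$\{b,\cdot,b\}$-type correction'' that should remove the components along $e_{j+1},\ldots,e_m$ (and the off-diagonal Peirce pieces) exactly is never specified. The missing idea is a one-line symmetrisation. The family $e_1,\ldots,e_j,-e_{j+1},\ldots,-e_m$ again consists of mutually orthogonal minimal tripotents, and its sum has the same Peirce projections as $e$, because $L(-e_k,-e_k)=L(e_k,e_k)$ and $L(e_k,e_l)=0$ for $k\neq l$. So $(a)$ gives norm-one $a,\tilde a\in E$ with $a=e+P_0(e)(a)$ and $\tilde a=e_1+\cdots+e_j-e_{j+1}-\cdots-e_m+P_0(e)(\tilde a)$. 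Then $b=\frac{a+\tilde a}{2}$ satisfies $b=e_1+\cdots+e_j+P_0(e)(b)$, and $1=\|P_2(e_1)(b)\|\le\|b\|\le 1$ since Peirce projections are contractive.
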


\begin{proof} $(a)$ If $W= E^{**}$, the desired conclusion follows from \cite[Theorem 3.3$(b)$]{BFMP06saito-tomima-lusin} combined with \cite[Lemma 1.6]{FriRuss85Crelles}. In the general case, the arguments preceding \cite[Theorem 3.2]{LiLiuPe26} can be combined with the aforementioned conclusion to get the result. \smallskip
	
$(b)$ The case $j=m$ follows from $(a)$. Assuming that $j<m$, we can apply statement $(a)$ to obtain two norm-one elements $a,\tilde{a}\in E,$ such that $$a = e_1+\ldots + e_j+ e_{j+1}+\ldots + e_m + P_0( e_1+\ldots+ e_m) (a),$$ and $$\tilde{a} = e_1+\ldots+ e_j - e_{j+1}-\ldots - e_m + P_0( e_1+\ldots+ e_m) (\tilde{a}).$$ The element $b= \frac{a+ \tilde{a}}{2}\in E$ satisfies the desired property.
\end{proof}

The first result of this section is devoted to provide sufficient conditions to guarantee that a JB$^*$-triple which is weak$^*$-densely embedded inside an $\ell_{\infty}$-sum of a family of Cartan factors has numerical index smaller than or equal to $1/2$.

\begin{theorem}\label{t non-commutative sufficient conditions for ni 1/2} Let $\{C_j\}_{j\in \Gamma}$ be an arbitrary (non-trivial) family of Cartan factors. Suppose $E$ is a weak$^*$-dense JB$^*$-subtriple of $\displaystyle W =\bigoplus_{j\in \Gamma}^{\ell_{\infty}} C_j$. Suppose that one of the following statements holds:
\begin{enumerate}[$(a)$]
\item For some subindex $j_0$ the Cartan factor $C_{j_0}$ has rank $\geq 2$.
\item Every $C_j$ is a complex Hilbert space regarded as type 1  Cartan factor, and dim$(C_{j_0}) \geq 2$ for some $j_0\in \Gamma$.  
\end{enumerate}	Then, the numerical index of the Banach space $E$ is smaller than or equal to $\frac12$. 
\end{theorem}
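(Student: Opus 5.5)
The plan is to exhibit, in each case, an operator $T\in L(E)$ with $\|T\|=1$ and $v(T)\leq \frac12$. The template is the Hilbert space $\mathbb{C}^2$ with $T(x)=\langle x|e_1\rangle e_2$, for which $\|T\|=1$ and $v(T)=\frac12$. I will transplant this rank-one operator into $E$ using minimal tripotents of $C_{j_0}$ and the approximation furnished by Lemma~\ref{l Lusin}.

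\emph{Step 1 (choice of tripotents and functionals).} In case $(a)$, pick two orthogonal minimal tripotents $e_1,e_2\in C_{j_0}$. Pick also a minimal tripotent $v\in C_{j_0}$ lying in $(C_{j_0})_1(e_1)\cap (C_{j_0})_1(e_2)$; in the model $M_2(\mathbb{C})\subseteq C_{j_0}$ these are $e_{11},e_{22}$ and $e_{12}$. In case $(b)$, take orthonormal $e_1,e_2\in C_{j_0}$ and put $v=e_2$. Let $\varphi_1$ be the weak$^*$-continuous norm-one functional on $W$ supported at $e_1$, i.e.\ $P_2(e_1)x=\varphi_1(x)e_1$ (the composite with the projection onto the $j_0$-th summand). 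Restrict $\varphi_1$ to $E$. By Lemma~\ref{l Lusin}$(a)$ applied to $\{e_1\}$ there is a norm-one $a\in E$ with $\varphi_1(a)=1$, so $\|\varphi_1|_E\|=1$.

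\emph{Step 2 (definition of $T$).} By Lemma~\ref{l Lusin}$(a)$ applied to the single minimal tripotent $v$, choose a norm-one $b\in E$ with $b=v+P_0(v)(b)$. Define $T(x)=\varphi_1(x)\,b$. Then $\|T\|=\|\varphi_1|_E\|\,\|b\|=1$.

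\emph{Step 3 (estimating $v(T)$, the main obstacle).} Take $x\in E$ and $\psi\in E^*$ with $\|x\|=\|\psi\|=\psi(x)=1$. Extend $\psi$ (Hahn--Banach, then weak$^*$-density and Goldstine) to a norm-one functional on $W$ or $E^{**}$ attaining its norm at $x$. The goal is
$$|\varphi_1(x)|\,|\psi(b)|\leq \tfrac12 .$$
I would prove this with the Peirce decomposition relative to the orthogonal pair $e_1,e_2$ (in the Hilbert case, relative to $e_1$ and $e_2$ in $H$).

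First, write $\lambda=\varphi_1(x)$, so the $P_2(e_1)$-component of $x$ is $\lambda e_1$. Since $\|x\|=1$, contractivity of the Peirce projections forces the $P_1(e_1)$-component of $x$ to be small when $|\lambda|$ is close to $1$. More precisely, the norm-one condition on the compressions to $e_1,e_2,v$ gives $|\lambda|^2+\|P_2(v)x\|^2\leq 1$, or a comparable estimate; in $M_2$ this is just $|x_{11}|^2+|x_{12}|^2\leq 1$.

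Second, bound $\psi(b)=\psi(v)+\psi(P_0(v)b)$. The tool here is the Cauchy--Schwarz type inequality for norm-attaining functionals: for $\psi$ attaining its norm at $x$, the support tripotent of $\psi$ dominates $r(x)$-type components. This yields
$$|\psi(v)|^2\leq \psi\text{-weights on } e_1 \text{ and } e_2,$$
which in $M_2$ is the positivity bound $|\phi(e_{12})|^2\leq \phi(e_{11})\phi(e_{22})$. The term with $P_0(v)b$ should be controlled because $P_0(v)b$ is orthogonal to $v$, so that $\|v+P_0(v)b\|=1$ splits the weight of $\psi$.

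Combining these inequalities with $\phi(e_{11})+\phi(e_{22})\leq 1$ and the AM--GM inequality should give the bound $\frac12$. I expect the hard part to be exactly this estimate in general Cartan factors, including the exceptional and spin ones. To handle it, I would first reduce to the JB$^*$-algebra $W_2(e_1+e_2)$, which is a spin factor or $M_2$-like. That reduction uses the known description of facial structure and of norm-attaining functionals via support tripotents; there I can compute directly.

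Taking the supremum over such pairs $(x,\psi)$ then gives $v(T)\leq\frac12=\frac12\|T\|$, hence $n(E)\leq\frac12$.
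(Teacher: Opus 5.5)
Your Step~3 inequality $|\varphi_1(x)|\,|\psi(b)|\le\frac12$ is false, and the rank-one strategy cannot be repaired.

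\emph{The positivity bound fails.} Take $E=W=M_2(\mathbb{C})$ with the most favourable choice $b=v=e_{12}$, so that $T(x)=x_{11}e_{12}$. Let $x=\left(\begin{smallmatrix}\frac{\sqrt3}{2}&-\frac12\\ \frac12&\frac{\sqrt3}{2}\end{smallmatrix}\right)$, which is unitary, let $\zeta=(-\frac12,\frac{\sqrt3}{2})$, and put $\psi(y)=\langle y\zeta|x\zeta\rangle$. Then $\|\psi\|=\psi(x)=1$, $x_{11}=\frac{\sqrt3}{2}$ and $\psi(e_{12})=-\frac34$. Hence $v(T)\ge\frac{3\sqrt3}{8}>\frac12$. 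The Cauchy--Schwarz bound $|\psi(e_{12})|^2\le\psi(e_{11})\psi(e_{22})$ that you rely on holds only for functionals that are positive with respect to $e_{11}+e_{22}$. A functional norming $x$ is positive only relative to its own support tripotent; here $\psi(e_{11})=\psi(e_{22})=\frac{\sqrt3}{4}$, so the bound fails.

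\emph{The component $P_0(v)(b)$ is not controlled.} Lemma~\ref{l Lusin} tells you nothing about it, and orthogonality to $v$ does not neutralise it, because a norming functional may live on another summand. Take $E=C([0,1],M_2(\mathbb{C}))$. It is weak$^*$-dense in $W=\bigoplus^{\ell_\infty}_{t\in[0,1]}M_2(\mathbb{C})$, so hypothesis $(a)$ holds, and here $\varphi_1(x)=x(t_0)_{11}$. Fix any norm-one $b\in E$ and $\varepsilon>0$. Choose:
\begin{itemize}
\item a point $t_1\neq t_0$ with $\|b(t_1)\|>1-\varepsilon$;
\item unit vectors $\zeta,\zeta'$ with $\langle b(t_1)\zeta|\zeta'\rangle=\|b(t_1)\|$;
\item a continuous unitary-valued $x\in E$ with $x(t_0)$ the identity matrix and $x(t_1)\zeta=\zeta'$.
\end{itemize}
With $\psi(y)=\langle y(t_1)\zeta|\zeta'\rangle$ you get $\|\psi\|=\psi(x)=1$ and $|\varphi_1(x)\psi(b)|>1-\varepsilon$, so $v(T)=\|T\|$. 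The same computation works in $C([0,1],\mathbb{C}^2)$ for case $(b)$. In fact these spaces have the Daugavet property, since $C(K,X)$ has it whenever $K$ is perfect. Therefore every rank-one operator on them satisfies $v(T)=\|T\|$, and no rank-one operator can witness $n(E)\le\frac12$ in general.

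\emph{What the paper does instead.} The missing ingredient is uniform control of the components of $W$ away from $j_0$, and the paper obtains it by abandoning rank-one operators.

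In case $(a)$ it computes no numerical radius at all. Lemma~\ref{l Lusin} gives $b=e_{11}+e_{22}+b_0$ and $a=v+a_0$ in $E$. The element $c=Q(b)(a)\in E(b)$ satisfies $(c\circ c^{*})\circ c\neq(c\circ c)\circ c^{*}$ in the JBW$^*$-algebra $W_2(r_W(b))$. By Proposition~\ref{p BCZ for subtriples}, the inner ideal $E(b)$ is therefore a non-associative JB$^*$-algebra, and \cite[Proposition 2]{CaPe24} gives $n(E)\le\frac12$.

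In case $(b)$ it uses the operator $L(d^\beta,a)$, which acts diagonally on $\bigoplus^{\ell_\infty}_{j}C_j$. Passing to the $\ell_1$-predual, this gives $v(L(d^\beta,a))=\sup_j v(L(d^\beta_j,a_j))$.
\begin{itemize}
\item The $j_0$-term is $v(\frac12\xi\otimes\eta)=\frac14$.
\item Every other term is at most $\frac14+\|\langle d^\beta|a\rangle\|_{\ell_\infty(\Gamma)}$.
\item This inner-product term is made arbitrarily small through the construction $c^\beta=b-\{a^{[\beta]},b,a^{[\beta]}\}^{*}$ of \cite{LiLiuPe26}.
\item Meanwhile $\|L(d^\beta,a)|_E\|\ge\frac12$.
\end{itemize}
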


\begin{proof} Under hypothesis $(a)$ we follow similar arguments to those employed in \cite[Theorem 5$(c)$]{CaPe24} with the help of Lemma~\ref{l Lusin}. We include a modified proof here for completeness reasons. Suppose that there exists $j_0\in \Gamma$ such that $C_{j_0}$ has rank $\geq 2$. It is known that under these assumptions $C_{j_0}$ contains a JB$^*$-subtriple $\mathcal{B}$ isometrically isomorphic to $M_2 (\mathbb{C})$ or to $S_2(\mathbb{C}) = \left\{ \left(\begin{array}{cc}
		\alpha & \beta \\
		\beta & \delta \\
	\end{array}\right) : \alpha, \beta, \delta\in \mathbb{C}\right\}$ (the $3$-dimensional spin factor), with the additional property that every minimal tripotent in $\mathcal{B}$ is minimal in $C_{j_0}$ (cf. \cite[Lemma 3.10 and the discussion prior to it]{FerPeAdv2018}, or the proof of $(xiv)\Rightarrow (iii)$ in \cite[Theorem 2]{CaPe24}). Therefore, the tripotents $e_{11} =  \left(\begin{array}{cc}
	1 & 0 \\
	0 & 0 \\
	\end{array}\right),$ $e_{22} =  \left(\begin{array}{cc}
	0 & 0 \\
	0 & 1 \\
	\end{array}\right),$ and $v =  \left(\begin{array}{cc}
	\frac12 & \frac{i}{2} \\
	\frac{i}{2} & -\frac{1}{2} \\
	\end{array}\right)$ are minimal in $\mathcal{B}$, $C_{j_0}$, and $W$. By Lemma~\ref{l Lusin} there exist norm-one elements $a,b\in E$ satisfying $b = e_{11} + e_{22} + b_0,$ and $a = v + a_0,$ where $b_0= P_0(e_{11}+e_{22}) (b)$ and $a_0 = P_0(e_{11}+e_{22}) (a)$. Note that $v$ and $\tilde{v} =   \left(\begin{array}{cc}
	-\frac12 & \frac{i}{2} \\
	\frac{i}{2} & \frac{1}{2} \\
	\end{array}\right)$ are two orthogonal tripotents in $\mathcal{B}$ with $v + \tilde{v} =   \left(\begin{array}{cc}
	0 & i \\
	i & 0 \\
	\end{array}\right),$ $e_{11}+e_{22}\in W_2 (v+\tilde{v})$, $v+\tilde{v}\in W_2 (e_{11}+e_{22})$, and hence $P_0(e_{11}+e_{22}) = P_0(v+\tilde{v})$. Set $$\begin{aligned}
	c = Q(b) (a) &=\{b,a,b\} = \{e_{11}+e_{22},v,e_{11}+e_{22}\} + \{b_0,a_0,b_0\} \\
	&= \left(\begin{array}{cc}
		\frac12 & -\frac{i}{2} \\
		-\frac{i}{2} & -\frac{1}{2} \\
	\end{array}\right) +Q(b_0) (a_0) = v^* +Q(b_0) (a_0).
	\end{aligned}$$ By construction, $c$ lies in the inner ideal, $E(b),$ of $E$ generated by $b,$ and $c_0 = P_0(e_{11}+e_{22}) (c) = Q(b_0) (a_0)\perp W_2(e_{11}+e_{22})\ni v, v^*=\left(\begin{array}{cc}
	\frac12 & -\frac{i}{2} \\
	-\frac{i}{2} & -\frac{1}{2} \\
	\end{array}\right)$. Clearly, the range tripotent of $b$ in $W$ ($r_{{\scaleto{W\mathstrut}{3.8pt}}}$ in short) satisfies $r_{{\scaleto{W\mathstrut}{3.8pt}}} = e_{11}+ e_{22} + r_{{\scaleto{W\mathstrut}{3.8pt}}}^{{\scaleto{0\mathstrut}{3.8pt}}}$, where $r_{{\scaleto{W\mathstrut}{3.8pt}}}^{{\scaleto{0\mathstrut}{3.8pt}}}$ stands for the range tripotent of $b_0$ in $W$. Observe that $c_0, c\in W_2(r_{{\scaleto{W\mathstrut}{3.8pt}}})$. Working on the JBW$^*$-algebra $W_2(r_{{\scaleto{W\mathstrut}{3.8pt}}})$, it is not hard to check, via matrix products and the uniqueness of the triple product in a JB$^*$-triple (see \cite[Proposition 5.5]{kaup83riemann} and the comments in page~\pageref{page uniqueness of the triple product}), that
	 $$\begin{aligned}
		(c\circ_{r_{{\scaleto{W\mathstrut}{3pt}}}} c^{*_{r_{{\scaleto{W\mathstrut}{3pt}}}}}) \circ_{r_{_{\scaleto{W\mathstrut}{3pt}}}} c &=  \{ \{c, c,  {r_{{\scaleto{W\mathstrut}{3pt}}}} \}, {r_{{\scaleto{W\mathstrut}{3pt}}}} ,c\} = \{ \{v^*, v^*,  {r_{{\scaleto{W\mathstrut}{3pt}}}} \}, {r_{{\scaleto{W\mathstrut}{3pt}}}},v^*\} +  \{ \{c_0, c_0,  {r_{{\scaleto{W\mathstrut}{3pt}}}} \}, {r_{{\scaleto{W\mathstrut}{3pt}}}} ,c_0\} \\
		& =  \left( \left(\begin{array}{cc}
			\frac12 & -\frac{i}{2} \\
			-\frac{i}{2} & -\frac{1}{2} \\
		\end{array}\right) \circ \left(\begin{array}{cc}
		\frac12 & -\frac{i}{2} \\
		-\frac{i}{2} & -\frac{1}{2} \\
		\end{array}\right)^* \right) \circ \left(\begin{array}{cc}
		\frac12 & -\frac{i}{2} \\
		-\frac{i}{2} & -\frac{1}{2} \\
		\end{array}\right) \\
		&+  (c_0\circ_{r_{{\scaleto{W\mathstrut}{3pt}}}} c_0^{*_{r_{{\scaleto{W\mathstrut}{3pt}}}}}) \circ_{r_{{\scaleto{W\mathstrut}{3pt}}}} c_0 \\
		&= \frac12 \left(\begin{array}{cc}
			\frac12 & -\frac{i}{2} \\
			-\frac{i}{2} & -\frac{1}{2} \\
		\end{array}\right) +  (c_0\circ_{r_{{\scaleto{W\mathstrut}{3pt}}}} c_0^{*_{r_{{\scaleto{W\mathstrut}{3pt}}}}}) \circ_{r_{{\scaleto{W\mathstrut}{3pt}}}} c_0 \\
		&= \frac12 v^* +  (c_0\circ_{r_{{\scaleto{W\mathstrut}{3pt}}}} c_0^{*_{r_{{\scaleto{W\mathstrut}{3pt}}}}}) \circ_{r_{{\scaleto{W\mathstrut}{3pt}}}} c_0,
	\end{aligned}, $$ where the last two summands are orthogonal. We can similarly derive that  
	$$\begin{aligned}
		(c\circ_{r_{{\scaleto{W\mathstrut}{3pt}}}} c) \circ_{r_{{\scaleto{W\mathstrut}{3pt}}}} c^{*_{r_{{\scaleto{W\mathstrut}{3pt}}}}} &= \{ \{c, {r_{{\scaleto{W\mathstrut}{3pt}}}}, c \}, {r_{{\scaleto{W\mathstrut}{3pt}}}} , \{{r_{{\scaleto{W\mathstrut}{3pt}}}} ,c , {r_{{\scaleto{W\mathstrut}{3pt}}}} \} \} \\
		&=  \{ \{v^*, {r_{{\scaleto{W\mathstrut}{3pt}}}}, v^* \}, {r_{{\scaleto{W\mathstrut}{3pt}}}} , \{{r_{{\scaleto{W\mathstrut}{3pt}}}} ,v^* , {r_{{\scaleto{W\mathstrut}{3pt}}}} \} \} + \{ \{c_0, {r_{{\scaleto{W\mathstrut}{3pt}}}}, c_0 \}, {r_{{\scaleto{W\mathstrut}{3pt}}}} , \{{r_{{\scaleto{W\mathstrut}{3pt}}}} ,c_0 , {r_{{\scaleto{W\mathstrut}{3pt}}}} \} \}\\
		 &= \left(\left(\begin{array}{cc}
			\frac12 & -\frac{i}{2} \\
			-\frac{i}{2} & -\frac{1}{2} \\
		\end{array}\right) \circ  \left(\begin{array}{cc}
		\frac12 & -\frac{i}{2} \\
		-\frac{i}{2} & -\frac{1}{2} \\
		\end{array}\right) \right) \circ \left(\begin{array}{cc}
		\frac12 & -\frac{i}{2} \\
		-\frac{i}{2} & -\frac{1}{2} \\
		\end{array}\right)^* \\
		&+  (c_0\circ_{r_{{\scaleto{W\mathstrut}{3pt}}}} c_0) \circ_{r_{{\scaleto{W\mathstrut}{3pt}}}} c_0^{*_{r_{{\scaleto{W\mathstrut}{3pt}}}}} \\
		&= 0 +  (c_0\circ_{r_{{\scaleto{W\mathstrut}{3pt}}}} c_0) \circ_{r_{{\scaleto{W\mathstrut}{3pt}}}} c_0^{*_{r_{{\scaleto{W\mathstrut}{3pt}}}}} .
	\end{aligned}$$ Therefore, $(c\circ_{r_{{\scaleto{W\mathstrut}{3pt}}}} c^{*_{r_{{\scaleto{W\mathstrut}{3pt}}}}}) \circ_{r_{{\scaleto{W\mathstrut}{3pt}}}} c \neq (c\circ_{r_{{\scaleto{W\mathstrut}{3pt}}}} c) \circ_{r_{{\scaleto{W\mathstrut}{3pt}}}} c^{*_{r(b)}}.$ Taking into account Proposition~\ref{p BCZ for subtriples}, we conclude that $E(b)$, regarded as a JB$^*$-subalgebra of $W_2(r_{{\scaleto{W\mathstrut}{3pt}}})$, is not associative. The same proposition proves that $E(b)$ regarded as a JB$^*$-subalgebra of $E^{**}_2 (r_{{\scaleto{E^{**}\mathstrut}{3pt}}} (b))$ is not associative either. Recall that a JB$^*$-algebra is associative if and only if it does not contain non-zero $2$-nilpotent elements \cite[Theorem 1]{IoLouRod1989commutativity}. Proposition 2 in \cite{CaPe24} finally assures that $n(E)\leq 1/2$. \smallskip

We assume next that all Cartan factors in the family $\{C_j\}_{j\in \Gamma}$ have rank-one, and hence they are all complex Hilbert spaces regarded as type 1 Cartan factors, and one of them, say $C_{j_0},$ has dimension $\geq 2$. We can therefore write elements in $W$ as families of the form $(x_j)_{j\in \Gamma}$, where each $x_j$ is an element in the Hilbert space $C_j$. Observe that triple products and continuous triple functional calculus on $W$ can be computed component-wise.\smallskip 

We follow next some arguments from \cite{LiLiuPe26}. Take two norm-one elements $\xi,\eta\in C_{j_0}$ with $\xi \perp \eta$. Since the elements $e = (e_j)_j$ and $v = (v_j)_j$ with $e_{j_0} = \xi$, $v_{j_0}= \eta$, and $v_j =e_{j} =0$ otherwise, are minimal tripotents in $W$, by \cite[Theorem 3.2]{LiLiuPe26} (see also \cite[Theorem 3.3]{BFMP06saito-tomima-lusin}) there exist norm-one elements $a,b\in E$ satisfying $b = e +  P_0 (e) (b),$  and  $a= v+ P_0 (v) (a)$ (that is, $a_{j_0} = \eta$ and $b_{j_0} = \xi$, and the remaining components are bounded in norm by $1$). Note that $W$ enjoys a natural structure of $\ell_{\infty}(\Gamma)$-module with respect to the pointwise multiplication and $\ell_{\infty}(\Gamma)$-valued inner product. By Proposition~\ref{p BCZ for subtriples}, the inner ideal $E(a)$ is a JB$^*$-subalgebra of $W_2 (r_{{\scaleto{W\mathstrut}{3.8pt}}}(a)).$ For each $0<\beta<1$ we write $a^{[\beta]}\in E$ the element obtained via continuous triple functional calculus of the function $f(t):= t^{\beta}$ ($t\in[0,1])$ at the element $a$. Note that $\|a^{[\beta]}\| =1$.  The same arguments given in the proof of \cite[Theorem 1.2]{LiLiuPe26} prove the following statements:\begin{enumerate}[$(i)$]
	\item $(c^{\beta}_j)_j = c^{\beta} := b - \{a^{[\beta]},b,a^{[\beta]}\}^{*_{\scaleto{r_{{\scaleto{W\mathstrut}{3.8pt}}}(a)\mathstrut}{3.8pt}}} \in E\backslash E(a),$ $\| c^{\beta}\| \leq 2$, and the $j_0$-component of $c^{\beta}$ is precisely $\xi$;
	\item $\displaystyle \left\| \langle a | c^{\beta} \rangle \right\|_{_{\ell_{\infty(\Gamma)}}} \leq \frac{2 \beta}{(1 + 2 \beta)^{\frac{1}{2 \beta} + 1}},$ for all $0<\beta <1$. 
\end{enumerate}

We need to refine the element $c^{\beta}$ to reduce its norm. Consider the continuous function $g:\mathbb{R}^+_0\to \mathbb{R},$ $g(t) = \min\{1,t\}$, and set via continuous triple functional calculus in $E$ or in $W$, $d^{\beta} := g_{t}(c^{\beta})\in E$ ($\beta \in (0,1)$). Since the continuous triple functional calculus on $W$ is computed component-wise, the $j$-component of $d^{\beta}$ is precisely $c^{\beta}_j$ if $\|c^{\beta}_j\| \leq 1,$ and $d^{\beta}_j = \frac{c^{\beta}_j}{\|c^{\beta}_j\|}$ otherwise. Therefore, $\|d^{\beta}\| = 1$ and $d^{\beta}_{j_0} = \xi$.\smallskip

It is not hard to check that the inequality  \begin{equation}\label{eq ineq inner product for dbeta} \left\| \langle a | d^{\beta} \rangle \right\|_{_{\ell_{\infty(\Gamma)}}} \leq \frac{2 \beta}{(1 + 2 \beta)^{\frac{1}{2 \beta} + 1}},
\end{equation} holds for all $0<\beta <1$. Namely, $\left\| \langle a | d^{\beta} \rangle \right\|_{_{\ell_{\infty(\Gamma)}}} = \sup_{j\in \Gamma} \left| \langle a_j | d^{\beta}_j \rangle \right|_{j}$. If $\|c_j^{\beta}\|\leq 1$ we have $d_j^{\beta} = c_j^{\beta}$, and thus $ \left| \langle a_j | d^{\beta}_j \rangle \right|_{j}  = \left| \langle a_j | c^{\beta}_j \rangle \right|_{j} \leq \frac{2 \beta}{(1 + 2 \beta)^{\frac{1}{2 \beta} + 1}}$ in this case. If $\|c_j^{\beta}\|>1$, we have $ \left| \langle a_j | d^{\beta}_j \rangle \right|_{j}  = \left| \langle a_j | \frac{c^{\beta}_j}{\|c^{\beta}_j\|} \rangle \right|_{j} \leq  \left| \langle a_j | c^{\beta}_j \rangle \right|_{j} \leq \frac{2 \beta}{(1 + 2 \beta)^{\frac{1}{2 \beta} + 1}}$. This concludes the proof of  \eqref{eq ineq inner product for dbeta}.\smallskip

Since $d^{\beta},a\in W$, the operator $L(d^{\beta},a): W\to W$ belongs to $B(W)$. Having in mind that $d^{\beta},a\in E$ and $E$ is a JB$^*$-subtriple of $W$, we conclude that $L(d^{\beta},a) (E)\subseteq E$. Lemma 4 in \cite{CaPe24} assures that \begin{equation}\label{eq inequlities numerical radious in E and W} v_{_{B(E)}} (L(d^{\beta},a)|_{E}) \leq  v_{_{B(W)}} (L(d^{\beta},a)).
\end{equation} 

By construction $$\begin{aligned} 1/2 = 1/2 \|\xi\|&= \| \{\xi, \eta, \eta\}\| = \left\| \{d^{\beta}_{j_0},a_{j_0},a_{j_0}\} \right\|\leq \left\| \{d^{\beta},a,a\} \right\|\leq  \left\| L(d^{\beta},a) (a) \right\| \\ &\leq \left\| L(d^{\beta},a)|_{E}  \right\|_{_{B(E)}} \leq \left\| L(d^{\beta},a)  \right\|_{_{B(W)}} \leq  \|d^{\beta}\| \ \| a\| =1.
\end{aligned}	$$

The rest of the proof is devoted to finding a good estimation of $v_{_{B(W)}} (L(d^{\beta},a))$ to deduce that $n(E)\leq 1/2$. Observe that $L(d^{\beta},a)$ behaves as a diagonal operator on $W = \bigoplus_{j\in \Gamma}^{\ell_{\infty}} C_{j}$, that is, $$\begin{aligned}
L(d^{\beta},a) \big((h_j)_{j\in \Gamma}\big) &= \big( L(d_j^{\beta},a_j) (h_j)_{j\in \Gamma}\big) \\
&= \Big(\{d^{\beta}_j, a_j, h_j\} \Big)_{j\in \Gamma} = \Big(\frac12 \langle d^{\beta}_j| a_j\rangle h_j + \frac12 \langle h_j| a_j\rangle d^{\beta}_j \Big)_{j\in \Gamma} \\
&= \Big(\Big(\frac12 \langle d^{\beta}_j| a_j\rangle Id_{_{C_{j}}} + \frac12 d^{\beta}_j\otimes a_j \Big)  (h_j)  \Big)_{j\in \Gamma} \\
&= \Big( \frac12 \langle d^{\beta}_j| a_j\rangle Id_{_{C_{j}}} + \frac12 d^{\beta}_j\otimes a_j \Big)_{j\in \Gamma} \big((h_j)_{j\in \Gamma}\big).
\end{aligned}  $$ We write $L(d^{\beta}_j,a_j)$ for the $j$th-component of $L(d^{\beta},a)$. Each component is a bounded linear operator on the corresponding complex Hilbert space $C_j$ (being the transposed of $\frac12 \overline{\langle d^{\beta}_j| a_j\rangle} Id_{_{C_{j}}} + \frac12 a_j\otimes d^{\beta}_j $). Therefore, $L(d^{\beta},a)$ can be seen as the transposed of the (diagonal) bounded linear operator $$L(d^{\beta},a)_*= \Big( \frac12 \overline{\langle d^{\beta}_j| a_j\rangle} Id_{_{C_{j}}} + \frac12 a_j\otimes d^{\beta}_j \Big)_{j\in \Gamma}$$ defined on $\displaystyle W_* = \bigoplus_{j\in \Gamma}^{\ell_{1}} C_{j}^*\cong \bigoplus_{j\in \Gamma}^{\ell_{1}} C_{j}$. It is known from \cite[Corollary 9.6]{BonDunBookI} that $$v_{_{B(W)}}\big(L(d^{\beta},a)\big) = v_{_{B(W_*)}}\big( L(d^{\beta},a)_*\big),$$ and it is not hard to check that \begin{equation}\label{eq numerical radius as a supremum} \begin{aligned}
	v_{_{B(W)}}\big(L(d^{\beta},a)\big) &= v_{_{B(W_*)}}\big( L(d^{\beta},a)_*\big) = \sup_{j\in \Gamma} v_{_{B(C_j)}}\big( L(d_j^{\beta},a_j)_*\big) \\
	&=   \sup_{j\in \Gamma} v_{_{B(C_{j})}}\big( L(d_j^{\beta},a_j)\big).
\end{aligned}
\end{equation} 

We compute each summand separately. For $j = j_0$ we have $$L(d_{j_0}^{\beta},a_{j_0}) = \frac12 \langle \xi| \eta\rangle Id_{_{C_{j_0}}} + \frac12 \xi\otimes \eta = \frac12 \xi\otimes \eta.$$ It is well known that \begin{equation}\label{eq numerical radius for j0} v_{_{B(C_{j_0})}} \left(L(d_{j_0}^{\beta},a_{j_0})\right) = v_{_{B(C_{j_0})}} \left(\frac12 \xi\otimes \eta\right) = \frac14.
\end{equation}

Suppose now that $C_j = \mathbb{C}$ is one-dimensional. In this case, $L(d_{j}^{\beta},a_{j}): \mathbb{C}\to \mathbb{C}$ is the multiplication operator by the scalar $d_{j}^{\beta} \overline{a_{j}}\in \mathbb{C}$, and it is well known that \begin{equation}\label{eq numerical radius of L_j with one-dim} v_{_{B(C_{j})}} (L(d_{j}^{\beta},a_{j})) =\left\| L(d_{j}^{\beta},a_{j}) \right\| = |d_{j}^{\beta}| \ | \overline{a_{j}}| =  | \langle d_{j}^{\beta}  | {a_{j}} \rangle |  \leq \| \langle d^{\beta} | a \rangle \|_{_{\ell_{\infty}(\Gamma)}}.
\end{equation}

Suppose now that dim$(C_j)\geq 2$. We can clearly assume that $a_j\neq 0$ and we can find an orthonormal system $\left\{\frac{a_j}{\|a_j\|},\eta_j\right\}$ in the Hilbert space $C_j$ such that $$ d_{j}^{\beta} = \left\langle d_{j}^{\beta}  \Big| \frac{a_j}{\|a_j\|} \right\rangle \frac{a_j}{\|a_j\|} + \langle d_{j}^{\beta}  | \eta_j \rangle \eta_j,$$ where $|\langle d_{j}^{\beta}  | \eta_j \rangle| \leq \|d_{j}^{\beta}\|  \ \|\eta_j \| \leq 1.$\smallskip

Given two vectors $\xi,\eta$ in a Hilbert space $H$ we write $\xi\otimes \eta$ for the operator on $H$ given by $(\xi\otimes \eta) (h) = \langle h |\eta\rangle \xi$ ($h\in H$). It is known that, since $\xi\otimes \xi$ is a positive operator, we have $v_{_{B(H)}} (\xi\otimes \xi) = \|\xi\otimes \xi\| = \|\xi\|^2.$ Moreover, if $\xi\perp \eta$ the numerical radius of $\eta\otimes \xi$ satisfies $v_{_{B(H)}} (\eta\otimes \xi) = \frac12 \| \eta\otimes \xi\|.$  According to this notation and the triple product defined on a Hilbert space regarded as a type 1 Cartan factor, the operator $L(d_{j}^{\beta},a_{j}):{C}_{j}\to {C}_j$ satisfies 
$$\begin{aligned}
	L(d_{j}^{\beta},a_{j}) &= \frac12  \left\langle d_{j}^{\beta}  \Big| {a_j} \right\rangle Id_{_{C_{j}}} + \frac12 d_{j}^{\beta}\otimes a_{j} \\
	&= \frac12  \left\langle d_{j}^{\beta}  \Big| {a_j} \right\rangle Id_{_{C_{j}}} + \frac12 \left\langle d_{j}^{\beta}  \Big|  a_j \right\rangle \frac{a_j}{\|a_j\|} \otimes \frac{a_j}{\|a_j\|} + \frac12 \langle d_{j}^{\beta}  | \eta_j \rangle \eta_j \otimes a_{j}, 
\end{aligned} $$ and thus \begin{equation}\label{eq numerical sarious for Cj with dimension geq 2} \begin{aligned}
	v_{_{B(C_j)}} \left(	L(d_{j}^{\beta},a_{j})\right) &\leq \frac12  \left| \langle d_{j}^{\beta}  \Big| {a_j}\rangle \right| \    v_{_{B(C_j)}}\left(Id_{_{C_{j}}}  + \frac{a_j}{\|a_j\|} \otimes \frac{a_j}{\|a_j\|}\right) \\
	&+ \frac12 \left|\langle d_{j}^{\beta}  | \eta_j \rangle\right| \  v_{_{B(C_j)}}\left(\eta_j \otimes a_{j}\right) \\
	&=  \frac12  \left| \langle d_{j}^{\beta}  \Big| {a_j}\rangle \right| \    \left\|Id_{_{C_{j}}}  + \frac{a_j}{\|a_j\|} \otimes \frac{a_j}{\|a_j\|}\right\| \\
	&+ \frac12 \left|\langle d_{j}^{\beta}  | \eta_j \rangle\right| \frac12 \left\|\eta_j \otimes a_{j}\right\| \\
	&\leq  \left| \langle d_{j}^{\beta}  \Big| {a_j}\rangle \right| + \frac14 \left|\langle d_{j}^{\beta}  | \eta_j \rangle\right| \leq \| \langle d^{\beta} | a \rangle \|_{_{\ell_{\infty}(\Gamma)}} +\frac14,
\end{aligned}
\end{equation} for all $0<\beta <1$ and all $j\in \Gamma$ such that dim$(C_j)\geq 2$. It follows now from \eqref{eq numerical radius as a supremum}, \eqref{eq inequlities numerical radious in E and W}, \eqref{eq numerical radius for j0}, \eqref{eq numerical radius of L_j with one-dim}, and \eqref{eq numerical sarious for Cj with dimension geq 2} that \begin{equation}\label{eq inequ numerical radious of L} v_{_{B(E)}} \left(	L(d^{\beta},a)\right) \leq v_{_{B(W)}} \left(	L(d^{\beta},a)\right) \leq \| \langle d^{\beta} | a \rangle \|_{_{\ell_{\infty}(\Gamma)}} +\frac14 \ \ (\hbox{for all }0<\beta <1).
\end{equation} \smallskip

Given a positive $\varepsilon$, by combining \eqref{eq ineq inner product for dbeta} with the fact that $\displaystyle\lim_{\beta\to 0^+} \frac{2 \beta}{(1 + 2 \beta)^{\frac{1}{2 \beta} + 1}} =0$, we deduce the existence of $0<\beta_0 <1$ such that $ v_{_{B(E)}} \left(	L(d^{\beta_0},a)\right) < \frac14 + \varepsilon.$ Finally, the inequality $$\frac14 + \varepsilon > v_{_{B(E)}} \left(	L(d^{\beta_0},a)\right) \geq n(E) \ \left\| L(d^{\beta_0},a) \right\| \geq \frac12 n(E),$$ combined with the arbitrariness of $\varepsilon >0,$ proves that $n(E)\leq 1/2$ as desired. 
\end{proof}

As commented in the introduction, the problem of determining whether a JB$^*$-triple with numerical index one as a Banach space is necessarily commutative has remained open along the last decades. Assuming the stronger hypothesis that $M$ is a JBW$^*$-triple, it is established in \cite[Theorem 4]{CaPe24} that $n(M) =1$ if and only if $M$ is commutative, while $\frac{1}{e}\leq n(M)\leq \frac12$ otherwise. Our main result in this paper is the following theorem which proves that the same conclusion holds for general JB$^*$-triples. 

\begin{theorem}\label{t non-commutative JB*-triples have numercial index bounded by 1/2} The numerical index of the Banach space associated to every non-commutative JB$^*$-triple is smaller than or equal to  $\frac12$. 
\end{theorem}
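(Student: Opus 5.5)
The plan is to reduce the statement to Theorem~\ref{t non-commutative sufficient conditions for ni 1/2} by embedding $E$ weak$^*$-densely into an $\ell_\infty$-sum of Cartan factors. Let $E$ be a non-commutative JB$^*$-triple. The atomic part of $E^{**}$ helps here. By the Friedman--Russo decomposition, $E^{**} = \mathcal{A}\oplus^{\ell_\infty}\mathcal{N}$, where $\mathcal{A}$ is the weak$^*$-closed triple ideal generated by the minimal tripotents of $E^{**}$, and $\mathcal{A}=\bigoplus^{\ell_\infty}_{j\in\Gamma} C_j$ is an $\ell_\infty$-sum of Cartan factors. Let $\pi_{\mathcal A}$ be the natural (weak$^*$-continuous, triple homomorphic) projection. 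Its restriction $\kappa=\pi_{\mathcal A}\circ\iota_E$ is an isometric triple embedding of $E$ into $W=\mathcal{A}$; isometry holds because the atomic part is norming for $E$ (the extreme points of the dual ball of $E$ lie in the predual of $\mathcal{A}$). The image $\kappa(E)$ is weak$^*$-dense in $W$ because $\iota_E(E)$ is weak$^*$-dense in $E^{**}$ and $\pi_{\mathcal A}$ is weak$^*$-continuous and surjective. So $E$ is, up to the obvious identification, a weak$^*$-dense JB$^*$-subtriple of $W=\bigoplus^{\ell_\infty}_{j} C_j$.

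Next we show that one of the hypotheses $(a)$ or $(b)$ of Theorem~\ref{t non-commutative sufficient conditions for ni 1/2} holds. If some $C_{j_0}$ has rank $\geq 2$, this is $(a)$. Otherwise every $C_j$ has rank one, so each is a complex Hilbert space regarded as a type 1 Cartan factor. If all of them were one-dimensional, then $W\cong\ell_\infty(\Gamma)$ would be a commutative JBW$^*$-triple, and the commutativity identity \eqref{eq identity for commutative triples} would pass to the subtriple $\kappa(E)\cong E$, contradicting non-commutativity. Hence some $C_{j_0}$ has dimension $\geq 2$, which is $(b)$. In either case Theorem~\ref{t non-commutative sufficient conditions for ni 1/2} gives $n(E)\leq \frac12$.

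The main obstacle is the first step: the embedding $\kappa$ must be isometric, equivalently injective as a triple homomorphism, and this needs the atomic decomposition to be cited correctly. I would argue as follows. The kernel of $\kappa$ is a norm-closed triple ideal of $E$. If it were non-zero, its bidual would be a non-zero weak$^*$-closed ideal of $E^{**}$ contained in $\mathcal{N}$. But every non-zero JBW$^*$-triple that is a bidual contains minimal tripotents: pure states of the ideal extend to pure states of $E$, whose support tripotents are minimal. This contradicts the definition of $\mathcal N$ as the atomless part. Injective triple homomorphisms between JB$^*$-triples are isometric, so $\kappa$ is isometric. I would rely on \cite{FriRuss85Crelles} and the fact that the extreme points of the dual ball are supported in the atomic part.
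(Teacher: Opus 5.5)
Your proposal is correct and follows the paper's proof: embed $E$ weak$^*$-densely in the atomic part $\bigoplus^{\ell_\infty}_{j\in\Gamma} C_j$ of $E^{**}$, exclude the case $C_j=\mathbb{C}$ for all $j$ by non-commutativity, and apply Theorem~\ref{t non-commutative sufficient conditions for ni 1/2}. The paper simply cites the Friedman--Russo Gelfand--Naimark theorem \cite{FriRuss86} for the isometric weak$^*$-dense embedding, so your extra justification of that step (norming by extreme points of the dual ball, or the kernel-ideal argument) is valid but not needed.
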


Before entering into the details of the arguments leading to the proof of Theorem~\ref{t non-commutative JB*-triples have numercial index bounded by 1/2}, we briefly revisit the representation theory of JB$^*$-triples and a Gelfand-Naimark type theorem, which allows us to embed every JB$^*$-triple inside an $\ell_{\infty}$-sum of Cartan factors. Every JBW$^*$-triple $W$ can be decomposed as the direct sum of two orthogonal weak$^*$-closed ideals $W_{at}$ and $W_n$, called the atomic and non-atomic parts of $W$, respectively, such that  $W_{at}$ is the weak$^*$-closure of the linear span of all tripotents in $W$, $(W_{at})_*$ coincides with the norm closure of the linear span of all extreme points in the closed unit ball of $W_*$, and the unit ball of $\left(W_n\right)_*$ has no extreme points (see \cite[Theorems 1 and 2]{FriRuss85Crelles}). It is further known that $W_{at}$ is (isometrically) JB$^*$-triple isomorphic to an $\ell_{\infty}$-sum of a family of Cartan factors (cf. \cite[Proposition 2]{FriRuss86}).\smallskip

If $E$ is a general JB$^*$-triple, we can consider its second dual space, $E^{**}$, which is a JBW$^*$-triple (cf. \cite{dineen86complete} or \cite[Proposition 5.7.10]{CabRodVol2}). We write $\iota_{E}: E \hookrightarrow E^{**}$ for the canonical embedding. Let $\mathcal{A}$ denote the atomic part of $E^{**}$ (which is non-zero and an $\ell_{\infty}$-sum of a family of Cartan factors $\{C_j\}_{j\in \Gamma}$ by what we commented above). The Gelfand-Naimark theorem established by Y. Friedman and B. Russo in \cite[Propositions 1 and 2 and Theorem 1]{FriRuss86} asserts that if $\pi_{at}$ stands for the natural projection of $E^{**}$ onto $\mathcal{A}$, the mapping $\Phi_{_E} = \pi_{at}\circ  \iota_{E} : E\hookrightarrow  \mathcal{A}=\bigoplus_{j\in \Gamma}^{\ell_{\infty}} C_j$ is an isometric triple embedding with weak$^*$-dense image. We keep this notation along the paper. 

\begin{proof}[Proof of Theorem~\ref{t non-commutative JB*-triples have numercial index bounded by 1/2}] Suppose $E$ is a non-commutative JB$^*$-triple. Let us write  $\mathcal{A}=\bigoplus_{j\in \Gamma}^{\ell_{\infty}} C_j$ for the atomic part of $E^{**}$. As commented above, by the Gelfand-Naimark theorem for JB$^*$-triples, the JB$^*$-triple $E$ embeds isometrically as a weak$^*$-dense JB$^*$-subtriple of $\mathcal{A}$. If $C_j = \mathbb{C}$ for all $j\in \Gamma,$ the JB$^*$-triple $\mathcal{A}$ is commutative and hence $E$ must be commutative too, which contradicts our assumption. Therefore, the JBW$^*$-triple $\mathcal{A}$ satisfies one of the statements $(a)$ or $(b)$ in Theorem~\ref{t non-commutative sufficient conditions for ni 1/2}, and hence the just quoted result implies that $n(E)\leq 1/2$ as desired.
\end{proof}

The next corollary provides a complete positive solution to the problems posed in \cite[page 384]{MarMathNach2008}, \cite{OikhbergMR2008} and \cite[Problem 1 and comments prior to it]{CaPe24}.

\begin{corollary}\label{c characterization of commutative JB*-triples as those with numerical index one} A JB$^*$-triple $E$ is commutative if and only if it has numerical index one, that is, $n(E)=1$. Consequently, for any JB$^*$-triple $E$ we have $$n(E)=1 \Leftrightarrow n(E^{*})=1\Leftrightarrow  n(E^{**})=1.$$
\end{corollary}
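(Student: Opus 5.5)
The plan is to prove the two implications of the characterization separately, and then deduce the chain of equivalences from known duality facts.

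For the forward direction, assume $E$ is commutative. By \cite[Corollary 1.11]{kaup83riemann}, $E$ is isometrically triple isomorphic to some $C_0^{\mathbb{T}}(L)$ for a principal $\mathbb{T}$-bundle $L$. To get $n(E)=1$, I would use the fact that $E^{**}$ is again commutative: the identity \eqref{eq identity for commutative triples} passes to the bidual because the triple product of $E^{**}$ is separately weak$^*$-continuous and $E$ is weak$^*$-dense in $E^{**}$. Hence $E^{**}$ is a commutative JBW$^*$-triple, and therefore a commutative von Neumann algebra by \cite[Theorem 3]{CaPe24}. Huruya's theorem then gives $n(E^{**})=1$. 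The duality inequalities $n(E^{**})\le n(E^*)\le n(E)\le 1$ force $n(E)=n(E^*)=n(E^{**})=1$. Alternatively, one can quote \cite[Lemma 3]{CaPe24}, as the introduction indicates.

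For the converse, assume $n(E)=1$. If $E$ were non-commutative, Theorem~\ref{t non-commutative JB*-triples have numercial index bounded by 1/2} would give $n(E)\le\frac12<1$, which is a contradiction. So $E$ is commutative. This completes the characterization.

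The equivalences then follow quickly:
\begin{enumerate}[$(i)$]
\item If $n(E)=1$, then $E$ is commutative, and the argument of the forward direction gives $n(E^*)=n(E^{**})=1$.
\item If $n(E^{**})=1$, then $n(E^{*})=1$ and $n(E)=1$ by the duality inequalities.
\item If $n(E^*)=1$, then $n(E)\ge n(E^*)=1$ directly.
\end{enumerate}

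The only step needing care is that commutativity passes from $E$ to $E^{**}$. I would handle it by a two-step weak$^*$ density argument using separate weak$^*$-continuity: first extend the identity in one variable while keeping the others in $E$, then extend in the next variable, and so on. This step is routine. The substantial work was already done in Theorem~\ref{t non-commutative JB*-triples have numercial index bounded by 1/2}.
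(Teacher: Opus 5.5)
Your proposal is correct and follows the paper's proof. The converse is exactly the paper's appeal to Theorem~\ref{t non-commutative JB*-triples have numercial index bounded by 1/2}. Your forward direction (commutativity passing to $E^{**}$ by separate weak$^*$-continuity, then \cite[Theorem 3]{CaPe24}, Huruya, and $n(E^{**})\le n(E^*)\le n(E)$) unpacks what the paper simply cites as \cite[Lemma 3]{CaPe24}. Your derivation of the chain of equivalences from the characterization and the duality inequalities is valid and replaces the paper's citation of \cite[Corollary 3 or Theorem 5]{CaPe24}.
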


\begin{proof} The ``only if'' implication follows from \cite[Lemma 3]{CaPe24}, while $n(E)=1$ implies that $E$ is commutative by Theorem~\ref{t non-commutative JB*-triples have numercial index bounded by 1/2}. The other equivalences can be found in \cite[Corollary 3 or Theorem 5]{CaPe24}.
\end{proof}

The equivalence of statements $(i)$ to $(vi)$ was established in \cite[Corollary 3]{CaPe24}. By virtue of Corollary~\ref{c characterization of commutative JB*-triples as those with numerical index one}, we can include the new equivalent condition $(vii)$, thus obtaining a complete generalization of \cite[Proposition 3.3]{MarMathNach2008} to JB$^*$-triples. Recall first that a Banach space $X$ satisfies the Daugavet property (respectively, the alternative Daugavet property) if $$\Vert{}\operatorname{Id}_X + T\Vert{} = 1 + \Vert{}T\Vert{} \quad \left(\text{respectively, } \max_{\vert{}\omega\vert{}=1} \Vert{}\operatorname{Id}_X + \omega T\Vert{} = 1 + \Vert{}T\Vert{}\right)$$ for every rank-one operator $T$ (cf. \cite{kssw,MarOikh2004,MarMathNach2008,KadMarBook2018,CaMarPe24}).

\begin{corollary}\label{c index 1 in preduals Daugavet} Let $E$ be a JB$^*$-triple. Then, the following are equivalent:
	\begin{enumerate}[$(i)$]
		\item $n(E^*) = 1$.
		\item $E^*$ has the alternative Daugavet property.
		\item $E^{**}$ has the alternative Daugavet property.
		\item $|\phi(u)| = 1$ whenever $u$ is an extreme point of the closed unit ball of $E^{**}$ and $\phi$ is an extreme point of the closed unit ball of $E^*$.
		\item The atomic part of $E^{**}$ is isometrically isomorphic to a commutative von Neumann algebra {\rm(}obviously atomic{\rm)}.
		\item $E$ {\rm(}equivalently, $E^{**}${\rm)} is commutative.
		\item $n(E) = 1$.
	\end{enumerate}
\end{corollary}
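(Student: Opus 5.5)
The plan is to obtain the whole cycle of equivalences from two ingredients: the equivalence of $(i)$--$(vi)$, which is \cite[Corollary 3]{CaPe24}, and Corollary~\ref{c characterization of commutative JB*-triples as those with numerical index one}, which ties the new condition $(vii)$ to commutativity. I will not reprove $(i)$--$(vi)$. It only remains to attach $(vii)$ to the cycle, and I will do this through $(vi)$.

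First I show $(vi)\Rightarrow(vii)$. If $E$ is commutative, then $n(E)=1$ by the ``only if'' direction of Corollary~\ref{c characterization of commutative JB*-triples as those with numerical index one}, that is, by \cite[Lemma 3]{CaPe24}. Next I show $(vii)\Rightarrow(vi)$. If $n(E)=1$, then Theorem~\ref{t non-commutative JB*-triples have numercial index bounded by 1/2} forces $E$ to be commutative, because otherwise $n(E)\le \frac12<1$. The parenthetical claim in $(vi)$, that $E$ is commutative exactly when $E^{**}$ is, is already part of the cited equivalence. It also follows directly from the fact that $E$ is a JB$^*$-subtriple of $E^{**}$, together with separate weak$^*$-continuity of the triple product, which extends the identity \eqref{eq identity for commutative triples} from $E$ to its weak$^*$-dense image in $E^{**}$.

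For the reader's convenience, I would also recall how $(i)$ connects to $(vii)$ without passing through the cited corollary. The inequality $n(E^*)\le n(E)$ gives $(i)\Rightarrow(vii)$. Conversely, $(vii)$ gives $(vi)$ as above, so $E^{**}$ is commutative. By \cite[Theorem 3]{CaPe24}, $E^{**}$ is then a commutative von Neumann algebra, and Huruya's theorem gives $n(E^{**})=1$. The duality bounds $n(E^{**})\le n(E^*)$ then yield $n(E^*)=1$.

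There is no genuine obstacle here. The only delicate point is bookkeeping: I need to check that the cited result \cite[Corollary 3]{CaPe24} really covers $(i)$--$(vi)$ for arbitrary JB$^*$-triples $E$, and not only for JBW$^*$-triples. Since its statements concern $E^*$ and $E^{**}$, and $E^{**}$ is a JBW$^*$-triple, this is the expected scope. The substantive content was Theorem~\ref{t non-commutative JB*-triples have numercial index bounded by 1/2}.
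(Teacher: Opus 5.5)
Your proposal is correct and follows the paper's route. The paper takes $(i)$--$(vi)$ from \cite[Corollary 3]{CaPe24} and attaches $(vii)$ through Corollary~\ref{c characterization of commutative JB*-triples as those with numerical index one}, that is, \cite[Lemma 3]{CaPe24} for one direction and Theorem~\ref{t non-commutative JB*-triples have numercial index bounded by 1/2} for the other; your supplementary direct argument for $(i)\Leftrightarrow(vii)$, via $n(E^{**})\le n(E^*)\le n(E)$, \cite[Theorem 3]{CaPe24} and Huruya's theorem, is also sound.
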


We conclude this paper with some additional facts about commutative JB$^*$-triples. In the first application, we derive a known result via a simple proof from our main theorem.

\begin{corollary}\label{c consequence of the charac of commutativity in terms of num index} Let $X$ be a complex Banach space. Then, the following statements hold: \begin{enumerate}[$(a)$]
\item Suppose $X$ admits a structure of C$^*$-algebra and a structure of JB$^*$-triple. Then the C$^*$-product on $X$ is commutative if and only if the triple product makes $X$ a commutative {JB}$^*$-triple.
\item Suppose $X$ admits a structure of JB$^*$-algebra and a structure of JB$^*$-triple. Then the Jordan product of $X$ is associative if and only if  the triple product makes $X$ a commutative {JB}$^*$-triple.
\end{enumerate}
\end{corollary}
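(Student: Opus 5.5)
The plan is to reduce both statements to comparisons of numerical indices, since $n(X)$ depends only on the Banach space structure of $X$. The first result we use is Corollary~\ref{c characterization of commutative JB*-triples as those with numerical index one}: for the given JB$^*$-triple structure on $X$, the triple product is commutative if and only if $n(X)=1$, and otherwise Theorem~\ref{t non-commutative JB*-triples have numercial index bounded by 1/2} gives $n(X)\leq \frac12$. The other ingredients are the classical dichotomies recalled in the introduction.

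For $(a)$ we invoke Huruya's theorem: for the C$^*$-algebra structure on $X$, $n(X)=1$ if the product is commutative and $n(X)=\frac12$ otherwise. Combining this with the corollary gives a chain of equivalences. The C$^*$-product is commutative if and only if $n(X)=1$, and $n(X)=1$ holds if and only if the given triple product makes $X$ a commutative JB$^*$-triple. The point is that both algebraic conditions are detected by the same Banach-space invariant $n(X)$, so no compatibility between the two products has to be assumed.

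For $(b)$ we argue in the same way, replacing Huruya's theorem with the Kaidi--Morales--Rodr\'{\i}guez-Palacios result. It states that $n(X)=1$ if the Jordan product of the JB$^*$-algebra $X$ is associative, and $n(X)=\frac12$ otherwise. Chaining this with Corollary~\ref{c characterization of commutative JB*-triples as those with numerical index one} again gives the equivalence.

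There is no real obstacle. The only thing to make explicit is that the numerical index is an isometric invariant of the underlying Banach space, independent of which algebraic structure is used to compute it. One might also recall, as a sanity check, that by Kaup's uniqueness of the triple product the triple structure on $X$ is the one induced by the C$^*$- or JB$^*$-algebra structure. The argument above, however, does not even need this.
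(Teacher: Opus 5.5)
Your argument is correct and is essentially the paper's own proof. The paper first mentions the algebraic route via Kaup's uniqueness of the triple product, then gives exactly your numerical-index argument: it combines Huruya's (respectively Kaidi--Morales--Rodr\'{\i}guez-Palacios') dichotomy with the fact that $n(X)$ depends only on the Banach space structure. By invoking the full Corollary~\ref{c characterization of commutative JB*-triples as those with numerical index one} rather than only Theorem~\ref{t non-commutative JB*-triples have numercial index bounded by 1/2}, you also make explicit the direction ``commutative triple $\Rightarrow n(X)=1$'', which one of the implications needs.
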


\begin{proof} In both cases, since the triple product of a JB$^*$-triple is unique (cf. \cite[Proposition 5.5]{kaup83riemann}), the triple products in $A$ and $\mathfrak{A}$ are given by $\{a,b,c\} = \frac12 (a b^* c + c b^* a)$ and $\{a,b,c\} = (a\circ b^*)\circ c + (c\circ b^*)\circ a - (a\circ c)\circ b^*$, respectively, where juxtaposition denotes the C$^*$-product on $A$ and $\circ$ denotes the Jordan product on $\mathfrak{A}$. It is known that both statements can be now obtained via algebraic manipulations.\smallskip 
	
However, we can avoid the use of \cite[Proposition 5.5]{kaup83riemann} and the algebraic checking. Observing that the numerical index of $A$ and $\mathfrak{A}$ is determined solely by the underlying Banach space structure, Theorem~\ref{t non-commutative JB*-triples have numercial index bounded by 1/2}—combined with the fact that $A$ is commutative (respectively, $\mathfrak{A}$ is associative) if and only if $n(A) = 1$ \cite{HuruyaPAMS1977} (respectively, $n(\mathfrak{A}) = 1$ \cite{KadMorRodPal2001})—allows us to give a simple proof of the desired statements.	       
\end{proof}

Le Page's theorem affirms that a normed unital associative complex algebra $A$ is commutative if and only if there exists $\gamma > 0$ satisfying $\Vert{}ab\Vert{} \leq \gamma \ \! \Vert{}ba\Vert{}$ for all $a,b\in A$ (see \cite{LePage} or \cite[\S 2.1, Theorem 1]{Aupetit79}). It should be noted that the hypothesis on the existence of a unit element cannot, in general, be relaxed. If $A$ is a (not necessarily unital) C$^*$-algebra, a result due to Kaplansky shows that $A$ is commutative if and only if $A$ contains no non-zero element $c$ with $c^2 = 0$. Taking $a = c$ and $b = c^* c$, it follows that $ab \neq 0$ while $ba = 0$. Thus, the non-commutativity of $A$ can be simply reduced to the existence of elements $a, b \in A$ such that $ab \neq 0$ and $ba = 0$. Similarly, by \cite[Theorem 1]{IoLouRod1989commutativity} a (not necessarily unital) JB$^*$-algebra $\mathfrak{A}$ is non-associative if there exists an element $a\in \mathfrak{A}$ such that $(a\circ a^*)\circ a \neq 0$ and $(a\circ a)\circ a^*=0.$\smallskip

In the recent paper \cite{LiLiuPe26}, we establish that a JB$^*$-triple $E$ is commutative if and only if it satisfies a Le Page-type inequality; specifically, there exists $\gamma > 0$ such that $\Vert{}\{a,b,\{x,y,z\}\}\Vert{} \le \gamma \Vert{}\{x,y,\{a,b,z\}\}\Vert{}$ for all $a,b,x,y,z \in E$. That is, commutativity is equivalent to the existence of a universal constant $\gamma > 0$ for which this inequality holds across all quintuples of elements in $E$. We have not been able to find an $n$-tuple $(x_1,\ldots, x_n) \in E^n$ with $n \in \{3,4\}$ such that $\{x,y,\{a,b,z\}\} = 0$ while $\{a,b,\{x,y,z\}\} \neq 0$ for some choices of $a,b,x,y,z$ among these elements. Nevertheless, we can show that commutativity can be locally characterized in terms of JB$^*$-subtriples generated by pairs of elements.

\begin{proposition}\label{prop commutativity is charcterized by pairs} Let $E$ be a JB$^*$-triple. Then the following statements are equivalent:
\begin{enumerate}[$(a)$]
\item $E$ is commutative.
\item There exists a positive $\gamma >0$ satisfying $\|\{a,b,\{x,y,z\}\}\| \leq \gamma \| \{x,y,\{a,b,z\}\}\|$ for all $a,b,x,y,z\in E$.
\item There exists a positive $\gamma >0$ satisfying $\|\{a,c,\{c,a,a\}\}\| \leq \gamma \| \{c,a,\{a,c,a\}\}\|$ for all $a,c\in E$.
\item Every JB$^*$-subtriple of $E$ generated by two elements is commutative. 
\item For each JB$^*$-subtriple $F$ of $E$ generated by two elements, there exists a positive $\gamma(F) >0,$ depending on $F$, satisfying $\|\{a,b,\{x,y,z\}\}\| \leq \gamma(F) \| \{x,y,\{a,b,z\}\}\|$ for all $a,b,x,y,z\in F$.
\item For each JB$^*$-subtriple $F$ of $E$ generated by two elements, there exists a positive $\gamma(F) >0,$ depending on $F$, satisfying $\|\{a,c,\{c,a,a\}\}\| \leq \gamma(F) \| \{c,a,\{a,c,a\}\}\|$ for all $a,c\in F$.
\end{enumerate}
\end{proposition}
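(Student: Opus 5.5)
The plan is to establish the trivial implications first and then close the cycle through the key implication $(f)\Rightarrow(a)$. If $E$ is commutative, identity \eqref{eq identity for commutative triples} gives $\{a,b,\{x,y,z\}\}=\{x,y,\{a,b,z\}\}$, so $(b)$ holds with $\gamma=1$. Specialising $(b)$ to $x=c$, $y=a$, $z=a$, $b=c$ gives $(c)$. Every JB$^*$-subtriple of a commutative triple is commutative, so $(a)\Rightarrow(d)$. Moreover $(d)\Rightarrow(e)$ with $\gamma(F)=1$, and $(e)\Rightarrow(f)$ by the same specialisation. Also $(c)\Rightarrow(f)$ is immediate, since $F\subseteq E$ and we may take $\gamma(F)=\gamma$. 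It therefore remains to prove $(f)\Rightarrow(a)$.

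For $(f)\Rightarrow(a)$ I argue by contraposition. Suppose $E$ is non-commutative. By Theorem~\ref{t non-commutative JB*-triples have numercial index bounded by 1/2} and its proof, $E$ embeds weak$^*$-densely in $\mathcal{A}=\bigoplus^{\ell_\infty}_j C_j$, and either some $C_{j_0}$ has rank $\geq 2$ or some $C_{j_0}$ is a Hilbert space of dimension $\geq 2$.

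\emph{Rank $\geq 2$ case.} The proof of Theorem~\ref{t non-commutative sufficient conditions for ni 1/2} yields $b\in E$ such that the JB$^*$-algebra $E(b)$ is non-associative. Then \cite[Theorem 1]{IoLouRod1989commutativity} provides $d\in E(b)$ with $(d\circ d^*)\circ d\neq0$ but $(d\circ d)\circ d^*=0$, the Jordan products and involution being taken relative to $r(b)$. I translate these into triple products. We have $(d\circ d)\circ d^*=\{\{d,r,d\},r,\{r,d,r\}\}$ and $(d\circ d^*)\circ d=\{\{d,d,r\},r,d\}$. These still involve $r=r(b)$, which need not lie in $E$. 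I would replace $r$ by odd triple powers $b^{[1/3^n]}$ or, more usefully, simply use $a:=d$ and $c:=b$ inside the subtriple generated by $\{b,d\}$. The goal is to find a pair $a,c$ for which $\{c,a,\{a,c,a\}\}=0$ while $\{a,c,\{c,a,a\}\}\neq0$. That violates $(f)$ for $F$ the subtriple generated by $a$ and $c$, whatever $\gamma(F)$ is.

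Concretely, I would test this pattern on the model in $M_2(\mathbb{C})$. Take $a$ nilpotent ($a=e_{12}$) and $c$ a suitable tripotent, e.g. $c=e_{11}$ or $c=v$ as in that proof. Compute $\{a,c,a\}$, $\{c,a,a\}$ and the outer products using $\{x,y,z\}=\frac12(xy^*z+zy^*x)$. Then I use Lemma~\ref{l Lusin} to lift to $E$, with orthogonal tails in $P_0$ that contribute only orthogonal summands. This mirrors the decomposition $c=v^*+c_0$ used above. Since the tail contributions are orthogonal to the $C_{j_0}$ part, the $C_{j_0}$-component of $\{a,c,\{c,a,a\}\}$ is nonzero, which gives a nonzero norm. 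The vanishing of the other expression is harder, since the tails could spoil it. If they do, I instead use a sequence of such pairs in which the tail contribution tends to $0$ while the main term stays bounded below. This is the device of $c^\beta$ with $\beta\to0$ in the Hilbert case, and it still contradicts a uniform $\gamma(F)$. Here a single $F$ is fixed by two elements, however. So I would rather pick $a$, $c$ and use functional calculus powers $a^{[\beta]}$ inside $F$, which lie in $F$ automatically.

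\emph{Hilbert case.} In $H$ with $\xi\perp\eta$, taking $a=\eta$ and $c=\xi$ gives $\{c,a,\{a,c,a\}\}=\{\xi,\eta,0\}=0$ (since $\{\eta,\xi,\eta\}=\langle\eta|\xi\rangle\eta=0$), while $\{a,c,\{c,a,a\}\}=\{\eta,\xi,\tfrac12\xi\}=\tfrac14\eta\neq0$. I then lift via the elements $a,d^\beta$ built in the proof of Theorem~\ref{t non-commutative sufficient conditions for ni 1/2}, whose $j_0$-components are $\eta,\xi$ and whose inner product tends to $0$.

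The main obstacle is controlling the off-$j_0$ components so that the right-hand side of $(f)$ is small relative to the left-hand side within one fixed two-generated $F$. I would try to choose generators so that the needed perturbations are functional-calculus expressions of the generators, and therefore lie in $F$.
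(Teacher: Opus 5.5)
Your easy implications are correct: the cycles $(a)\Rightarrow(b)\Rightarrow(c)\Rightarrow(f)$ and $(a)\Rightarrow(d)\Rightarrow(e)\Rightarrow(f)$ even avoid the citation of \cite{LiLiuPe26} that the paper uses. Your case split and model computations are also sound. However, $(f)\Rightarrow(a)$ is not proved when some inner ideal $E(b)$ is non-associative, because you never fix a concrete pair inside one two-generated subtriple.

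\textbf{The non-associative case.} The choice you call ``more useful'', $a:=d$ and $c:=b$, fails. The element $d$ is nilpotent relative to $r(b)$, not relative to $b$, and $\{d,b,d\}=U_d(b)$ is in general non-zero. For instance, in $M_2(\mathbb{C})$ take $b=\bigl(\begin{smallmatrix}1&1/2\\1/2&1\end{smallmatrix}\bigr)$ and $d=e_{12}$. Then $r(b)=1$ and $d\circ d=0$, but $\{d,b,d\}=\frac12 e_{12}$ and $\{b,d,\{d,b,d\}\}\neq0$. The Lusin-lifting route leaves $P_0$-tails that you admit you cannot control.

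The missing idea is the one you mention and then drop. Take $d\in E(b)$ with $(d\circ d)\circ d^*=0\neq (d\circ d^*)\circ d$, with products relative to $r(b)$. Let $F$ be the subtriple generated by $b$ and $d$, and put $u_n=b^{[1/(2n+1)]}\in F$. Weak$^*$ convergence $u_n\to r(b)$ is useless here, since the expressions in $(f)$ are not jointly weak$^*$-continuous and $(f)$ is a norm inequality. What you need is that $(u_n)$ is a \emph{norm} approximate unit of the JB$^*$-algebra $(E(b),\circ_{r(b)},*_{r(b)})$ (statement (4) in the proof of \cite[Proposition 2.3]{LiLiuPe26}). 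Writing triple products in $E(b)$ through the Jordan product, this gives, in norm,
$\{u_n,d,\{d,u_n,d\}\}\to (d\circ d)\circ d^*=0$ and $\{d,u_n,\{u_n,d,d\}\}\to (d\circ d^*)\circ d\neq 0$.
This contradicts $(f)$ for the single subtriple $F$. It is exactly your pair of translated identities with $r$ replaced by $u_n$, and it is the paper's argument.

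\textbf{The Hilbert case.} Here your direction is the paper's, and the estimates are easy. You have $\|\{d^\beta,a,\{a,d^\beta,a\}\}\|\le \|\langle a|d^\beta\rangle\|_{\ell_\infty(\Gamma)}\to 0$, while the $j_0$-component of $\{a,d^\beta,\{d^\beta,a,a\}\}$ is $\frac14\eta$.

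What you leave open is the fixed $F$. Take $F$ to be the subtriple generated by the two elements $a,b$ of that construction, and check that every $c^\beta$ lies in $F$; then $d^\beta=g_t(c^\beta)$ lies in $F$ as well. To see this, compute componentwise in $W$:
\begin{itemize}
\item $\{a^{[\beta]},b,a^{[\beta]}\}^{*_{r(a)}}=(\|a_j\|^{2\beta}P_jb_j)_j$, where $P_j$ is the orthogonal projection onto $\mathbb{C}a_j$;
\item $L(a^{[\gamma]},a^{[\gamma]})$ acts as $\frac12\|a_j\|^{2\gamma}(I+P_j)$.
\end{itemize}
Hence $4L(a^{[\beta/2]},a^{[\beta/2]})^2-2L(a^{[\beta]},a^{[\beta]})$ acts as $2\|a_j\|^{2\beta}P_j$. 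Applying this operator to $b$ shows that $(\|a_j\|^{2\beta}P_jb_j)_j\in F$, and therefore $c^\beta\in F$.
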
 

\begin{proof} The equivalences $(a)\Leftrightarrow (b)\Leftrightarrow (c)$ and $(d)\Leftrightarrow (e)\Leftrightarrow (f)$ follow from \cite[Theorem~1.2 and its proof]{LiLiuPe26}. Clearly $(a)\Rightarrow (d)$, so we only need to prove that $(f)\Rightarrow (a)$. Proceeding by contradiction, assume that $E$ is not commutative. By \cite[Corollary 1]{CaPe24}, one of the following statements holds: \begin{enumerate}[$(1)$]
		\item There is an element $a$ in $E$ such that the inner ideal $E(a)$ contains a non-zero $2$-nilpotent element $b$ as JB$^*$-algebra, that is,  $\{b,r(a),b\}=0$.
		\item Every single generated inner ideal of $E$ is an associative JB$^*$-algebra and the atomic part of $E^{**}$ reduces to a $\ell_{\infty}$-sum of Hilbert spaces and at least one of them has dimension greater than or equal to $2$. 
	\end{enumerate}
	
In the first case the element $a$ is clearly non-zero. Let $F$ denote the JB$^*$-subtriple of $E$ generated by $a$ and $b$. By statement $(4)$ in the proof of \cite[Proposition 2.3]{LiLiuPe26}, the sequence $(a^{[\frac{1}{2n +1}]})_{n}$ is an approximate unit for the JB$^*$-algebra $(E(a), \circ_{_{r(a)}}, *_{_{r(a)}})$. Since the triple product on $E(a)$ is unique (cf. \cite[Proposition 5.2]{kaup83riemann}), we deduce that $$\{b,a^{[\frac{1}{2n +1}]},b\} = 2(b \circ_{_{r(a)}} a^{[\frac{1}{2n +1}]}) \circ_{_{r(a)}} b - (b\circ_{_{r(a)}} b) \circ_{_{r(a)}} a^{[\frac{1}{2n +1}]}$$ converges to  $(b\circ_{_{r(a)}} b) = \{b,r(a), b\}=0$ in norm. Consequently, \begin{equation}\label{eq first triple product tends to zero} \left\| \{a^{[\frac{1}{2n +1}]},b,\{b,a^{[\frac{1}{2n +1}]},b\}\} \right\|\to 0.
\end{equation} Similar arguments to those given above assure that the sequence $$\begin{aligned}
 \{a^{[\frac{1}{2n +1}]},b,b\} &= (a^{[\frac{1}{2n +1}]}\circ_{_{r(a)}} b^{*_{_{r(a)}}}) \circ_{_{r(a)}} b + (b\circ_{_{r(a)}} b^{*_{_{r(a)}}} )\circ_{_{r(a)}} a^{[\frac{1}{2n +1}]} \\
 &- (a^{[\frac{1}{2n +1}]}\circ_{_{r(a)}} b)\circ_{_{r(a)}} b^{*_{_{r(a)}}} 
\end{aligned}$$ converges to $(b\circ_{_{r(a)}} b^{*_{_{r(a)}}} )$ in norm, and consequently  
\begin{equation}\label{eq second triple product tends to non-zero} 
\{b, a^{[\frac{1}{2n +1}]},\{a^{[\frac{1}{2n +1}]},b,b\}\}\to b\circ_{_{r(a)}} (b\circ_{_{r(a)}} b^{*_{_{r(a)}}} ) =\frac12  U_{b} (b^{*_{_{r(a)}}} ) =\frac12 \{b,b,b\},
\end{equation} in norm. By combining \eqref{eq first triple product tends to zero}, \eqref{eq second triple product tends to non-zero}, $\| \{b,b,b\} \| = \|b\|^3 \neq 0$, and the fact that $b,a^{[\frac{1}{2n +1}]}\in F$ for all natural $n$, we get a contradiction with $(f)$.\smallskip

In the second case, the atomic part of $E^{**}$ decomposes into an $\ell_\infty$-direct sum of Hilbert spaces, where at least one space has dimension at least $2$. As observed in the proof of Theorem~\ref{t non-commutative sufficient conditions for ni 1/2}, a reapplication of the arguments in the proof of \cite[Theorem 3.2]{LiLiuPe26} guarantees the existence of two elements $a,b\in E$ with the following properties: \begin{enumerate}[$(i)$]\item $c^{\beta} := b - \{a^{[\beta]},b,a^{[\beta]}\}^{*_{r(a)}} \in E\backslash E(a)$ for all $0<\beta<1$, where $r(a)$ stands for the range tripotent of $a$ in $E^{**}$, and $a^{[\beta]}$ is determined by continuous triple functional calculus;
\item $\|\{c^{\beta},a,\{a,c^{\beta},a\}\}\| \leq \frac{4 \beta}{(1 + 2 \beta)^{\frac{1}{2 \beta} + 1}}\stackrel{\beta\to 0}{\longrightarrow} 0$, \item $\|\{a, c^{\beta},\{c^{\beta},a,a\}\}\|\geq \frac{1}{4}$ for all  $0<\beta<1$. 
\end{enumerate} Let $F$ denote the JB$^*$-subtriple of $E$ generated by $a$ and $b$. By observing that $F^{**}$ coincides with the weak$^*$-closure of $F$ in $E^{**}$, it can be easily deduced that $r(a)$ belongs to $F^{**}$, making it the range tripotent of $a$ in $F^{**}$. The properties of the continuous triple functional calculus show that $c^{\beta} \in F$ for all $0 < \beta < 1$. If we now combine $(ii)$ and $(iii)$, we obtain a contradiction with $(f)$. 
\end{proof}

\medskip
\medskip

\textbf{Acknowledgements} L. Li was supported by National Natural Science Foundation of China (grant No. 12571143). A.M. Peralta has been supported by MICIU/AEI/10.13039/501100011033 and ERDF/EU through the grants PID2021-122126NB-C31 and PID2025-167660NB-I00, by ``Maria de Maeztu'' Excellence Unit IMAG, reference CEX2020-001105-M, and by the Bureau of Foreign Experts Affairs, MOHRSS, PRC China (grant S20250924). \smallskip

This work was partly carried out during A.M. Peralta’s visit to Nankai University and the Chern Institute of Mathematics in June 2026, whose hospitality is gratefully acknowledged.\medskip

\subsection*{Statements and Declarations} 

All authors declare that they have no conflicts of interest to disclose.

\subsection*{Data availability}

There is no data associated for this submission.

	\end{document}